\newcommand{\Kon}{\mathcal{K}_o^n}
\newcommand{\latn}{\mathcal{L}^n}
\newcommand{\vol}{\mathrm{vol}\,}
\newcommand{\rg}{\mathrm{rg}\,}
\newcommand{\inte}{\mathrm{int}}
\newcommand{\lin}{\mathrm{lin}\,}
\newcommand{\R}{\mathbb{R}}
\newcommand{\Z}{\mathbb{Z}}
\newcommand{\N}{\mathbb{N}}
\newtheorem{theorem}{Theorem}[section]
\newtheorem{corollary}[theorem]{Corollary}
\newtheorem{lemma}[theorem]{Lemma}
\newtheorem{example}[theorem]{Example}
\newtheorem{remark}[theorem]{Remark}
\newtheorem{proposition}[theorem]{Proposition}  
\newtheorem{claim}{Claim}
\numberwithin{equation}{section}
\begin{document}

\title{Restricted Successive Minima} 
\author{Martin Henk}
\address{Fakult\"at f\"ur Mathematik, Otto-von-Guericke Universit\"at
  Magdeburg, Universit\"atsplatz 2, D-39106 Magdeburg, Germany}
\email{martin.henk@ovgu.de, carsten.thiel@ovgu.de}
\author{Carsten Thiel}

\keywords{}
\subjclass[2010]{52C07, 11H06}
\begin{abstract} We give bounds on the successive minima of an
  $o$-symmetric convex body under the restriction that the lattice
  points realizing the successive minima 
are not contained in a collection of forbidden sublattices.  Our
investigations extend former results to forbidden full-dimensional
lattices, to all successive minima and complement former results in
the lower dimensional case.
\end{abstract}
\maketitle

\section{Introduction}
Let $\Kon$ be the set of all $o$-symmetric convex bodies in $\R^n$ with non-empty interior, i.e., $K\in \Kon$ is an $n$-dimensional  compact convex set satisfying $K=-K$. The volume, i.e., the $n$-dimensional Lebesgue measure,   of a subset $X\subset\R^n$ is denoted by $\vol X$. 
By a lattice   $\Lambda\subset \R^n$ we understand a free $\Z$-module of rank $\rg\Lambda \leq n$. 
The set of all lattices is denoted by $\latn$,  and  $\det\Lambda$  denotes the determinant of $\Lambda\in\latn$, that is the $(\rg\Lambda)$-dimensional volume of a fundamental cell of $\Lambda$.  

For $K\in\Kon$ and $\Lambda\in\latn$, Minkowski introduced the $i$-th successive minimum $\lambda_i(K,\Lambda)$, $1\leq i \leq \rg\Lambda$, as the smallest positive number $\lambda$ such that $\lambda\,K$ contains at least $i$ linearly independent lattice points of $\Lambda$, i.e.,  
\begin{equation*}
 \lambda_i(K,\Lambda)=\min\{\lambda\in\R_{\geq 0}: \dim(\lambda\,K\cap\Lambda)\geq i\}, \quad 1\leq i\leq \rg\Lambda.
\end{equation*} 
Minkowski's first fundamental theorem, see e.g.~\cite[Sections 22--23]{Gruber:2007um}, on successive minima establishes an upper bound on the first successive minimum in terms of the volume of a convex body.
More precisely, for $K\in\Kon$ and $\Lambda\in\latn$ with $\rg\Lambda =r$ it may be formulated as 
\begin{equation}
\lambda_1(K,\Lambda)^r \vol_r(K\cap\lin\Lambda)\leq 2^r\det\Lambda,
\label{eq:minkowski_first}
\end{equation}  
where $\vol_r(\cdot)$ denotes the $r$-dimensional
volume, here with
respect to the subspace $\lin\Lambda$, the linear hull of
$\Lambda$. In the case $r=n$ we just write $\vol(\cdot)$. One of the many successful applications of this inequality is 
related to the so-called ``Siegel's lemma'', which is a synonym for 
results bounding the norm of a non-trivial  lattice point lying in a linear
subspace  given as $\ker A$ where $A\in\Z^{m\times n}$ is
an integral matrix of rank $m$. For instance, with respect to the
maximum norm $|\cdot|_\infty$, it was shown by Bombieri\&Vaaler
\cite{Bombieri:1983uz} (see also Ball\&Pajor \cite{Ball1990}) that there exists a
$z\in\ker A\setminus\{0\}$ such that 
\begin{equation*}
  |z|_\infty \leq  \sqrt{\det(A\,A^\intercal)}^\frac{1}{n-m}. 
\end{equation*}
In fact, this follows by \eqref{eq:minkowski_first}, where $K=[-1,1]^n$
is the cube of edge length $2$, $\Lambda=\ker
A\cap\Z^n$ is an $(n-m)$ dimensional lattice of  determinant $\leq \sqrt{\det(A\,A^\intercal)}$, and Vaaler's
result on the minimal volume of  a slice  of a cube (cf. 
~\cite{Vaaler1979}) which here gives 
$\vol_{n-m}([-1,1]^n\cap\lin\Lambda)\geq 2^{n-m}$. For generalizations 
of Siegel's lemma to number fields we refer to
\cite{Bombieri:1983uz}, 
\cite{Fukshansky:2006ti},\cite{Fukshansky:2006db}, \cite{Gaudron:2009iu}, \cite{Gaudron:2012hq}, \cite{Vaaler:2003dw} and the references within.

Motivated by questions in Diophantine approximation, Fukshansky
studied in \cite{Fukshansky:2006ti} an inverse problem to that addressed in
Siegel's lemma, namely to bound the norm of lattice points which are
not contained in the union of proper sublattices. In order to describe
his result we need a bit more notation. 

For a collection of sublattices $\Lambda_i\subset\Lambda$, $1\leq i\leq
s$,   with $\cup_{i=1}^s\Lambda_i\ne \Lambda$ we call 
 \begin{equation*}
 \lambda_i(K,\Lambda\setminus\cup_{i=1}^s\Lambda_i)=\min\{\lambda\in\R_{\geq 0}: \dim(\lambda\,K\cap \Lambda\setminus\cup_{i=1}^s\Lambda_i)\geq i\}, \quad 1\leq i\leq \rg\Lambda,
\end{equation*} 
the {\em $i$-th restricted successive minimum} of $K$ with respect to
$\Lambda\backslash\cup_{i=1}^s\Lambda_i$.  Observe that by the compactness of $K$
and the discreteness of $\Lambda\backslash\cup_{i=1}^s\Lambda_i$ these
minima are well-defined.
Furthermore, they behave nicely
 with respect to dilations, as for $\mu>0$ we have 
\begin{equation}
\lambda_i(\mu\,K,\Lambda\setminus\cup_{i=1}^s\Lambda_i)=
\lambda_i(K,\tfrac{1}{\mu}(\Lambda\setminus\cup_{i=1}^s\Lambda_i))=\tfrac{1}{\mu}\,\lambda_i(K,\Lambda\setminus\cup_{i=1}^s\Lambda_i).
\label{eq:dilation}
\end{equation}

Moreover, for a lattice
$\Lambda\in\latn$, $r=\rg\Lambda$, and a basis $(b_1,\dots,b_r)$,
$b_j\in\R^n$, of $\Lambda$ let $v(\Lambda)\in\R^{\binom{n}{r}}$ be the
vector with entries  $\det B_j$, where $B_j$ is an $r\times r$
submatrix of $(b_1,\dots,b_r)$. Observe that up to the order of the
coordinates the vector is independent of the given basis, and on
account of the Cauchy-Binet formula the
Euclidean norm of  $v(\Lambda)$ is the determinant of the lattice.
With this notation, Fukshansky proved \cite[Theorem
1.1]{Fukshansky:2006ti}
\begin{equation}
\lambda_1\left([-1,1]^n,
  \Lambda\setminus\cup_{i=1}^s\Lambda_i\right)\leq
\left(\frac{3}{2}\right)^{r-1}r^r\left(\sum_{i=1}^s\frac{1}{|v(\Lambda_i)|_\infty}+\sqrt{s}\right)|v(\Lambda)|_\infty
+1, 
\label{eq:fukshansky}
\end{equation}
for proper sublattices $\Lambda_i$, i.e.,  $\rg\Lambda_i<\rg\Lambda=r$,
$1\leq i\leq s$. This result was generalized and improved in various
ways by Gaudron \cite{Gaudron:2009iu} and Gaudron\&R{\'emond}
\cite{Gaudron:2012hq}. In particular, \eqref{eq:fukshansky} has been
extended to all  $o$-symmetric bodies
as well as  to the adelic setting (see also \cite[Lemma
3.2]{Gaudron:tk} for an application). For instance, the following is a 
simplified version of  \cite[Theorem 6.1]{Gaudron:2009iu} when we assume that
$\rg\Lambda_i=\rg\Lambda-1=r-1$ (see also
\cite[Theorem 2.2, Corollary 3.3]{Gaudron:2012hq})
\begin{equation} 
\begin{split}
\lambda_1\left(K,
  \Lambda\setminus\cup_{i=1}^s\Lambda_i\right) \leq  \nu\,\max_{1\leq
  i\leq
  s}\Bigg\{1,&\frac{\nu^{r-1}\vol(K\cap\lin\Lambda_i)}{\omega_r\,\det\Lambda_i},\\
&\left(\frac{\nu}{\lambda_1(K,\Lambda\cap\lin\Lambda_i)}\right)^{\frac{r-2}{2}}\Bigg\}, 
\end{split}
\label{eq:gaudron}
\end{equation}
where $\nu=7\,r(s\,\omega_r\det\Lambda/\vol(K))^{1/r}$ and  $\omega_r$ is
the volume of the $r$-dimensional unit ball. For an algorithmic
treatment of $\lambda_1\left(K,
  \Lambda\setminus\cup_{i=1}^s\Lambda_i\right)$
in the case $s=1$ and $K$ the unit ball of an $l_p$-norm we refer to 
\cite{Naewe:2007vl}.

In our first theorem we want to complement  these results on
forbidden lower dimensional lattices  by a bound 
which takes care of the size or the structure of the
individual forbidden sublattices such that the bound becomes essentially
\eqref{eq:minkowski_first} if $\lambda_1(K,\Lambda_i)\to\infty$ for
$1\leq i\leq s$. In this case 
the bounds  in \eqref{eq:fukshansky} and \eqref{eq:gaudron}
still have a dependency on $s$ of order $\sqrt{s}$ and
$s^{1/r}$, respectively. 
Here we have the following result. 
\begin{theorem} Let $K\in\Kon$, $\Lambda\in\latn$, $\rg\Lambda=n\geq 2$, and
  let $\Lambda_i\subset\Lambda$, $1\leq i\leq s$, $\rg\Lambda_i\leq
  n-1$, be sublattices. Then 
\begin{equation*}
 \lambda_1\left(K,
  \Lambda\setminus\cup_{i=1}^s\Lambda_i\right)<  6^{n-1}\frac{\det\Lambda}{\lambda_1(K,\Lambda)^{n-2}\vol(K)}\left(\sum_{i=1}^s\frac{1}{\lambda_1(K,\Lambda_i)}\right)+\sqrt[n]{2^n\frac{\det\Lambda}{\vol(K)}}.
\end{equation*}
\label{thm:lower}
\end{theorem}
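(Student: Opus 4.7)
My plan is to count lattice points in a common dilate $\mu' K$, balancing a lower bound on $\#(\mu' K\cap\Lambda)$ against an upper bound on $\sum_i\#(\mu' K\cap\Lambda_i)$. Set $\lambda=\lambda_1(K,\Lambda)$, $\lambda_i=\lambda_1(K,\Lambda_i)$, $D=\det\Lambda/\vol K$, $S=\sum_{i=1}^s 1/\lambda_i$, and abbreviate $A=6^{n-1}DS/\lambda^{n-2}$, $B=\sqrt[n]{2^n D}$. Since $\mu:=\lambda_1(K,\Lambda\setminus\cup_{i=1}^s\Lambda_i)\geq\lambda$, in the case $\mu=\lambda$ Minkowski's first theorem already gives $\mu\leq B<A+B$, so I may assume $\mu>\lambda$ and pick any $\mu'\in[\lambda,\mu)$; for such $\mu'$ every lattice point of $\Lambda$ in $\mu' K$ lies in $\cup_{i=1}^s\Lambda_i$.

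Since $0\in\bigcap_{i=1}^s\Lambda_i$, inclusion--exclusion yields
\[
 \#(\mu' K\cap\Lambda)-1\;\leq\;\sum_{i=1}^s\bigl(\#(\mu' K\cap\Lambda_i)-1\bigr).
\]
For the left-hand side I would invoke the symmetric sharpening of Blichfeldt's theorem: applied to $\tfrac12\mu' K$ it produces a set $T$ of $\lfloor{\mu'}^n/(2^n D)\rfloor+1$ points in $\tfrac12\mu' K$ with $T-T\subset\Lambda$, and the elementary sumset inequality $|T-T|\geq 2|T|-1$, combined with $\tfrac12\mu' K-\tfrac12\mu' K=\mu' K$ (from $o$-symmetry and convexity), yields $\#(\mu' K\cap\Lambda)\geq 2\lfloor{\mu'}^n/(2^n D)\rfloor+1\geq{\mu'}^n/(2^{n-1}D)-1$. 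For the right-hand side I would use the standard packing argument inside $\lin\Lambda_i$: the translates $x+\tfrac12\lambda_i(K\cap\lin\Lambda_i)$, $x\in\mu' K\cap\Lambda_i$, are interior-disjoint (by definition of $\lambda_i$) and sit inside $(\mu'+\tfrac12\lambda_i)(K\cap\lin\Lambda_i)$, yielding $\#(\mu' K\cap\Lambda_i)\leq(2\mu'/\lambda_i+1)^{\rg\Lambda_i}\leq(2\mu'/\lambda_i+1)^{n-1}$.

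To marry the two, I would expand the binomial, use $\lambda_i\geq\lambda$ to get $\sum_i\lambda_i^{-k}\leq S\lambda^{-(k-1)}$ for every $k\geq 1$, and exploit $\mu'\geq\lambda$ to replace $1+2\mu'/\lambda$ by $3\mu'/\lambda$. This produces
\[
 \sum_{i=1}^s\bigl((2\mu'/\lambda_i+1)^{n-1}-1\bigr)\;\leq\;S\lambda\bigl[(1+2\mu'/\lambda)^{n-1}-1\bigr]\;\leq\;3^{n-1}S\,{\mu'}^{n-1}/\lambda^{n-2},
\]
and combining with the lower bound (after multiplying through by $2^{n-1}D$) gives the clean polynomial inequality ${\mu'}^{n-1}(\mu'-A)\leq B^n$, valid for every $\mu'\in[\lambda,\mu)$.

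Letting $\mu'\to\mu^-$ I obtain $\mu^{n-1}(\mu-A)\leq B^n$. If one had $\mu\geq A+B$, then $\mu-A\geq B$ and $\mu\geq B$, so the left-hand side would be at least $B^n$; equality would force $\mu=B$ and $A=0$, contradicting $A>0$ (which holds whenever at least one $\Lambda_i$ contains a nonzero vector). Hence $\mu<A+B$, as claimed. The main obstacle I foresee is pinning down the constant $6^{n-1}$: the weaker Blichfeldt bound $\#(\mu' K\cap\Lambda)\geq{\mu'}^n/(2^n D)$ would, through the same computation, yield $2\cdot 6^{n-1}$, so the sumset sharpening $|T-T|\geq 2|T|-1$ is essential; the additive losses of $-1$ and $-2$ coming from the Blichfeldt floor and the inclusion--exclusion correction are then precisely absorbed into the additive term $B=\sqrt[n]{2^n D}$ via the polynomial inequality above.
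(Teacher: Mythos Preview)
Your argument is correct and is essentially the same as the paper's proof: both combine van der Corput's lower bound $\#(\gamma K\cap\Lambda)>\gamma^n\vol(K)/(2^{n-1}\det\Lambda)-1$ with the packing upper bound $\#(\gamma K\cap\Lambda_i)\le(2\gamma/\lambda_1(K,\Lambda_i)+1)^{\rg\Lambda_i}$, reduce to the polynomial inequality $\gamma^n-A\gamma^{n-1}\le B^n$, and observe that $\gamma=A+B$ violates it strictly when $A>0$. The only cosmetic differences are that the paper normalises to $\lambda_1(K,\Lambda)=1$ and argues directly (finding a $\overline\gamma$ making the count positive), whereas you keep $\lambda$ explicit and pass to the limit $\mu'\to\mu^-$; your explicit derivations of the two counting bounds are precisely the standard proofs of the results the paper cites as \eqref{eq:vandercorput} and \eqref{eq:first_lattice}.
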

Observe that, in case $s=0$ or all $\lambda_1(K,\Lambda_i)$ very large, we
get essentially \eqref{eq:minkowski_first}. 
Our second main theorem deals with  forbidden full-dimensional sublattices,
i.e., $\rg\Lambda_i=\rg\Lambda$, $1\leq i\leq s$, 
and here we will show.
\begin{theorem} Let $K\in\Kon$, $\Lambda\in\latn$, $\rg\Lambda=n\geq 2$, and
  let $\Lambda_i\subset\Lambda$, $1\leq i\leq s$, $\rg\Lambda_i=
  n$, be sublattices such that $\cup_{i=1}^s\Lambda_i\ne\Lambda$. Then 
\begin{equation*}
 \lambda_1\left(K,
  \Lambda\setminus\cup_{i=1}^s\Lambda_i\right)< \frac{2^n\det\Lambda}{\lambda_1(K,\overline{\Lambda})^{n-1}\vol(K)}
\biggl(\sum_{i=1}^s \frac{\det\overline{\Lambda}}{\det\Lambda_i} -s+1
\biggr)+
\lambda_1(K,\overline{\Lambda}),
\end{equation*}
where $\overline{\Lambda}=\bigcap_{i=1}^s\Lambda_i$.
\label{thm:full}
\end{theorem}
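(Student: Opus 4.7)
The plan is to combine a group-theoretic count of the ``bad'' cosets of $\overline{\Lambda}$ in $\Lambda$ with a Minkowski-type argument in the hyperplane orthogonal to a shortest vector of $\overline{\Lambda}$. Let $G := \Lambda/\overline{\Lambda}$ and $H_i := \Lambda_i/\overline{\Lambda}$; since each $H_i$ is a subgroup of $G$ containing the identity,
\[
\bigl|\textstyle\bigcup_i H_i\bigr| \le 1 + \sum_i(|H_i|-1) = \sum_i[\Lambda_i:\overline{\Lambda}] - s + 1 = \sum_i\frac{\det\overline{\Lambda}}{\det\Lambda_i} - s + 1 =: M.
\]
Hence at most $M$ of the $[\Lambda:\overline{\Lambda}]$ cosets of $\overline{\Lambda}$ lie inside $\bigcup_i\Lambda_i$, and the task reduces to exhibiting strictly more than $M$ distinct $\overline{\Lambda}$-cosets that are represented by lattice vectors of controlled $K$-norm.

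Set $\mu := \lambda_1(K,\overline{\Lambda})$ and fix a nonzero $b \in \overline{\Lambda}\cap\mu K$. Because $b \in \overline{\Lambda} \subseteq \Lambda_i$ for every $i$, translations by $\Z b$ preserve membership in $\bigcup_i\Lambda_i$: a vector $v \in \Lambda$ is ``good'' iff $v+kb$ is good for every $k\in\Z$. Let $\pi\colon \R^n\to b^\perp$ be orthogonal projection, so that $\pi(\Lambda)$ is a rank-$(n-1)$ lattice with $\det\pi(\Lambda) = g\det\Lambda/|b|_E$, where $|b|_E$ is the Euclidean length of $b$ and $g := [\Lambda\cap\R b : \Z b]$; each fibre of $\pi$ over a point of $\pi(\Lambda)$ contains exactly $g$ distinct $\overline{\Lambda}$-cosets. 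By Brunn's theorem applied to the $o$-symmetric body $K$, the central slice satisfies $\vol_{n-1}(K\cap b^\perp) \ge \mu\vol(K)/(2|b|_E)$, so the cylinder $C_\rho := \rho(K\cap b^\perp) + [-b,b] \subseteq (\rho+\mu)K$ has volume at least $\rho^{n-1}\mu\vol(K)$.

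Applying Minkowski's theorem (in the sharpened form of van der Corput or Blichfeldt) to the lattice $\pi(\Lambda)$ in $b^\perp$ with the body $\rho(K\cap b^\perp)$, then multiplying by the $g$ cosets per fibre, shows that the number of $\overline{\Lambda}$-cosets represented by lattice vectors in $C_\rho\cap\Lambda$ is of the order $\rho^{n-1}\mu\vol(K)/(2^n\det\Lambda)$. Choosing $\rho$ so that this count strictly exceeds $M$ yields, by pigeonhole, a representative of a good coset in $C_\rho \subseteq (\rho+\mu)K$, and hence $\lambda_1(K,\Lambda\setminus\bigcup_i\Lambda_i) < \rho + \mu$. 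The main obstacle will be calibrating $\rho$ so as to recover the precise linear-in-$M$ first summand $2^n\det\Lambda\cdot M/(\mu^{n-1}\vol(K))$, since a naive inversion of the coset count gives only $\rho = O(M^{1/(n-1)})$. This is most cleanly handled by splitting into two regimes: when $[\Lambda:\overline{\Lambda}]$ is large compared to $M$, direct application of Minkowski's theorem to $\rho K\cap\Lambda$ for $\rho<\mu/2$ already produces more than $M$ lattice points that are automatically in distinct cosets since $2\rho K\cap\overline{\Lambda}=\{0\}$; in the complementary regime the cylinder estimate together with the Minkowski inequality $\mu^n\vol(K)\le 2^n\det\overline{\Lambda}$ is used to trade powers of $\rho$ for powers of $\mu$ and so extract the stated form. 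The additive $+\mu$ summand reflects the final one-dimensional shift by $b$ that aligns a representative within the interval $[-b,b]$.
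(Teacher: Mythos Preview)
Your group-theoretic count of the bad cosets is exactly what the paper uses (their Lemma~3.2(i)). The divergence is in the geometric step, and there the proposal has a genuine gap.

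First, the Brunn inequality you invoke is backwards. From $b\in\mu K$ you only get the \emph{lower} bound $h_K(b/|b|_E)\ge |b|_E/\mu$ on the half-width, whereas the slice estimate $\vol_{n-1}(K\cap b^\perp)\ge\vol(K)/(2h)$ needs an \emph{upper} bound on $h$. A concrete counterexample: let $K=[-1,1]\times[-\epsilon,\epsilon]$, take $\overline{\Lambda}$ generated by $b=(\epsilon,\epsilon)$ and $(N,0)$ for large $N$; then $\mu=1$, $\vol_1(K\cap b^\perp)=2\sqrt{2}\,\epsilon$, while your claimed lower bound $\mu\vol(K)/(2|b|_E)=\sqrt{2}$ is much larger for small $\epsilon$. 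So the cylinder volume estimate, and hence the coset count derived from it, collapses.

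Second, even granting a corrected slice bound, the projection argument counts \emph{lattice points}, not distinct $\overline{\Lambda}$-cosets: two points in $C_\rho\cap\Lambda$ over different fibres can lie in the same coset whenever their projections differ by an element of $\pi(\overline{\Lambda})$, so ``number of fibres times $g$'' is an upper bound on cosets, not the lower bound you need. More importantly, the count you would obtain scales like $\rho^{n-1}$, giving $\rho\sim M^{1/(n-1)}$, and your two-regime sketch does not close the gap: the ``direct Minkowski'' regime requires roughly $\mu^n>2^{2n-1}M\det\Lambda/\vol(K)$, while the cylinder estimate only yields $\rho\le B$ when $\mu^n\le 2^nM\det\Lambda/\vol(K)$, leaving an intermediate range uncovered and in any case not delivering the exact constant.

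The paper avoids all of this by working directly on the torus $\R^n/\overline{\Lambda}$ and invoking the Kneser--Macbeath sum theorem: writing $\lambda=\lfloor\lambda/\lambda_1\rfloor\lambda_1+\rho\lambda_1$ and using
\[
\vol_T\bigl((\tfrac{\lambda}{2}K)/\overline{\Lambda}\bigr)\ \ge\ \min\Bigl\{\bigl(\lfloor\tfrac{\lambda}{\lambda_1}\rfloor+\rho^n\bigr)\bigl(\tfrac{\lambda_1}{2}\bigr)^n\vol(K),\ \det\overline{\Lambda}\Bigr\},
\]
one sees that the torus volume grows \emph{linearly} in $\lambda$ once $\lambda>\lambda_1$. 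Combined with a torus version of van~der~Corput (if $\vol_T((\tfrac{1}{2}\lambda K)/\overline{\Lambda})\ge m\det\Lambda$ then $\lambda K$ meets $m+1$ distinct cosets), this linear growth is precisely what produces the linear-in-$M$ first summand with the stated constant. The additive $\lambda_1(K,\overline{\Lambda})$ comes from the crude estimate $\lfloor\lambda/\lambda_1\rfloor+\rho^n>\lambda/\lambda_1-1$, not from a one-dimensional shift as in your outline. This sum-theorem mechanism is the missing idea; without it I do not see how to obtain the sharp form of the bound.
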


In the special case $s=1$ the theorem above can be formulated 
as 
 \begin{corollary} Let $K\in\Kon$, $\Lambda\in\latn$, $\rg\Lambda=n\geq 2$, and
  let $\Lambda_1\subsetneq\Lambda$, $\rg\Lambda_1= n$, be a sublattice. Then 
\begin{equation*}
 \lambda_1\left(K,
  \Lambda\setminus\Lambda_1\right)\leq \frac{2^n\det\Lambda}{\lambda_1(K,\Lambda_1)^{n-1}\vol(K)}+
\lambda_1(K,\Lambda).
\end{equation*}
\label{cor:onefull}
\end{corollary}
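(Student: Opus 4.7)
The plan is to deduce Corollary \ref{cor:onefull} as an almost immediate consequence of Theorem \ref{thm:full}, together with a short case distinction that sharpens the additive term from $\lambda_1(K,\Lambda_1)$ to $\lambda_1(K,\Lambda)$.

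First I would specialise Theorem \ref{thm:full} to $s=1$. In that case $\overline{\Lambda}=\bigcap_{i=1}^{1}\Lambda_i=\Lambda_1$, and the parenthesised expression in the theorem collapses to $\det\Lambda_1/\det\Lambda_1-1+1=1$. Theorem \ref{thm:full} therefore delivers the preliminary estimate
\begin{equation*}
\lambda_1(K,\Lambda\setminus\Lambda_1) < \frac{2^n\det\Lambda}{\lambda_1(K,\Lambda_1)^{n-1}\vol(K)} + \lambda_1(K,\Lambda_1).
\end{equation*}

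To upgrade the trailing $\lambda_1(K,\Lambda_1)$ to the possibly smaller $\lambda_1(K,\Lambda)$, I would split on whether or not the global first minimum is realised inside $\Lambda_1$. Since $\Lambda_1\subset\Lambda$, one always has $\lambda_1(K,\Lambda)\leq\lambda_1(K,\Lambda_1)$. If this is an equality, the preliminary estimate is already the asserted bound. Otherwise $\lambda_1(K,\Lambda)<\lambda_1(K,\Lambda_1)$, and any non-zero $v\in\Lambda$ with $v\in\lambda_1(K,\Lambda)\,K$ must lie in $\Lambda\setminus\Lambda_1$ — for if such a $v$ belonged to $\Lambda_1$ we would have $\lambda_1(K,\Lambda_1)\leq\lambda_1(K,\Lambda)$, contradicting the strict inequality. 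Hence in this subcase $\lambda_1(K,\Lambda\setminus\Lambda_1)\leq\lambda_1(K,\Lambda)$, and the conclusion follows by adding the non-negative first summand on the right-hand side.

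There is essentially no genuine technical obstacle here beyond reading off what Theorem \ref{thm:full} states when $s=1$; the only mildly substantive observation is that as soon as $\lambda_1(K,\Lambda)$ is strictly smaller than $\lambda_1(K,\Lambda_1)$, the shortest lattice vector of $\Lambda$ automatically avoids the forbidden sublattice, which is precisely what allows the cleaner additive term $\lambda_1(K,\Lambda)$ in the corollary.
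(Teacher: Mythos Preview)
Your proof is correct and follows the same route as the paper: the authors also deduce the corollary directly from Theorem~\ref{thm:full} with $s=1$, remarking only that ``we may assume $\lambda_1(K,\Lambda_1)=\lambda_1(K,\Lambda)$.'' Your case distinction simply makes explicit why this assumption is harmless, namely that a strictly shorter vector in $\Lambda$ would already avoid $\Lambda_1$.
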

Indeed, the corollary is just an immediate consequence of Theorem
\ref{thm:full}, since in this case we 
 may
assume
$\lambda_1(K,\Lambda_1)=\lambda_1(K,\Lambda)$.

The following
example shows that the bound in Theorem~\ref{thm:full} as well as the
one of the corollary above cannot be improved in general by a
multiplicative factor. 
\begin{example} Let $K\in\mathcal{K}^2_o$ be the rectangle
  $K=[-1,1]\times[-\alpha,\alpha]$ of edge-lengths 2 and
  $2\,\alpha$, $\alpha\leq 1$, and of volume $4\alpha$. 
Let $\Lambda=\Z^2$, and let $\Lambda_1,\Lambda_2\subset\Z^2$ be
  the sublattices 
\begin{equation*}
\Lambda_1=\{(z_1,z_2)^\intercal\in\Z^2: z_2\equiv
  0\bmod 2\},\, \Lambda_2=\{(z_1,z_2)^\intercal\in\Z^2: z_1\equiv
  0\bmod p\},
\end{equation*}
where $p>2$ is a prime.
Then $\det\Lambda=1$, $\det\Lambda_1=2$, $\det\Lambda_2=p$,
 and 
\begin{equation*}
\overline\Lambda=\Lambda_1\cap\Lambda_2 =\{(z_1,z_2)^\intercal\in\Z^2: z_2\equiv
  0\bmod 2,\,z_1\equiv
  0\bmod p\}
\end{equation*}
with $\det\overline\Lambda=2p$. For $\alpha \leq 2/p$
we therefore have $\lambda_1(K,\overline{\Lambda})=p$. Regarding
the set $\Lambda\setminus(\Lambda_1\cup\Lambda_2)$ we
observe that the lattice points on the axes are forbidden, but not
$(1,1)^\intercal$ and so
$\lambda_1(K,\Lambda\setminus(\Lambda_1\cup\Lambda_2))=1/\alpha$. Putting
everything together, the bound in Theorem \ref{thm:full} evaluates for
$\alpha\leq 2/p$ to
\begin{equation*}
\frac{1}{\alpha}=\lambda_1(\Lambda\setminus(\Lambda_1\cup\Lambda_2))
< \frac{4}{p\,4\alpha}(p+1) + p = \frac{1}{\alpha}+\frac{1}{p\alpha}+p.
\end{equation*}
Hence for $\alpha=2/p^2$ and  $p\to \infty$ the bound cannot be
improved by a multiplicative factor.

In the situation of Corollary \ref{cor:onefull}, i.e., we consider only the
forbidden lattice $\Lambda_1$, the upper bound in the corollary evaluates to
$\frac{1}{\alpha}+1$, whereas, as before,
$\lambda_1(K,\Lambda\setminus\Lambda_1)=1/\alpha$.
\end{example}
Before
beginning with the proofs of our results we would like to mention 
a closely related problem, namely  to cover $K\cap\Lambda$, $K\in\Kon$
by a minimal number
$\gamma(K)$ of lattice hyperplanes.  Obviously, having a $\nu>0$ with
$\gamma(\nu K)\geq s+1$ implies that 
\begin{equation*}
 \lambda_1(K,\Lambda\setminus\cup_{i=1}^s\Lambda_i)\leq \nu
\end{equation*}
in the case of lower dimensional sublattices $\Lambda_i$.
For bounds on  $\gamma(K)$ in terms of the successive minima and other
functionals from the Geometry of Numbers we refer to   
 \cite{Barany:2001uz}, \cite{Bezdek:1994wt}, \cite{Bezdek:2009bc}.

The proof of Theorem
\ref{thm:lower} will be given in the next section and
full-dimensional forbidden sublattices, i.e., 
Theorem \ref{thm:full}, will be treated in  Section
\ref{sec:full}. In each of the sections we also present some extensions of the
 results above to higher successive minima, i.e., to  $\lambda_i\left(K,
   \Lambda\setminus\cup_{i=1}^s\Lambda_i\right)$, $i>1$.

\section{Avoiding lower-dimensional sublattices}\label{sec:lower}
In the course of the proof we have to estimate the number of
lattice points in a centrally symmetric convex body, i.e., to bound
$|K\cap\Lambda|$ from below and above. Assuming $K\in\Kon$ and
$\rg\Lambda=n$, we will
use as a lower bound a  classical result of van der Corput, see, e.g.,
\cite[p. 51]{Gruber:1987vp}, 
\begin{equation}
|K\cap \Lambda|\geq 2\left\lfloor\frac{\vol(K)}{2^n\det\Lambda}\right\rfloor +1 > \frac{\vol(K)}{2^{n-1}\det\Lambda}-1. 
\label{eq:vandercorput}
\end{equation}
As upper bound we will use a bound in terms of the first successive
minima by Betke et al. \cite{Betke1993e} 
\begin{equation}
|K\cap \Lambda| \leq\left(\frac{2}{\lambda_1(K,\Lambda)}+1\right)^{n}.
\label{eq:first_lattice}
\end{equation}

\begin{proof}[Proof of Theorem \ref{thm:lower}] By scaling $K$ with
  $\lambda_1(K,\Lambda)$ we may assume without loss of 
  generality that $\lambda_1(K,\Lambda)=1$, i.e., $K$ contains no 
  non-trivial lattice point in its interior (cf.~\eqref{eq:dilation}). Let
  $n_i=\rg\Lambda_i<n$. 
  By  \eqref{eq:first_lattice} we get  for $\gamma\geq 1$ and on
  account of $\lambda_1(K,\Lambda_i)\geq\lambda_1(K,\Lambda)=1$ we get 
\begin{equation}
  |\gamma\,K\setminus\{0\}\cap \Lambda_i |\leq
  \left(\gamma\,\frac{2}{\lambda_1(K,\Lambda_i)}+1\right)^{n_i} 
  -1<\gamma^{n-1} 3^{n-1}\frac{1}{\lambda_1(K,\Lambda_i)}.
\label{eq:ineq_subspaces}
\end{equation}
Hence, for $\gamma\geq 1$ we have 
\begin{equation}
|\gamma\,K\setminus\{0\}\cap (\cup_{i=1}^s \Lambda_i)| < \gamma^{n-1}\,3^{n-1}\,\sum_{i=1}^s
 \frac{1}{\lambda_1(K,\Lambda_i)}.
\end{equation}
Combining this bound with the upper bound \eqref{eq:vandercorput}
leads for $\gamma\geq 1$ to the estimate 
\begin{equation*}
\begin{split}
|\gamma K\setminus\{0\}&\cap \Lambda\setminus \cup_{i=1}^s \Lambda_i| >
\gamma^n\frac{\vol(K)}{2^{n-1}\det\Lambda}-2 - |\gamma\,K\setminus\{0\}\cap
(\cup_{i=1}^s \Lambda_i)|\\
 &> \gamma^n\frac{\vol(K)}{2^{n-1}\det\Lambda}-\gamma^{n-1}\,3^{n-1}\left(\sum_{i=1}^s
 \frac{1}{\lambda_1(K,\Lambda_i)}\right) -2\\ 
&=\frac{\vol(K)}{2^{n-1}\det\Lambda}\left(\gamma^n-\gamma^{n-1}\,\beta-\rho\right),
\end{split}
\end{equation*}
where 
\begin{equation*}
 \beta=6^{n-1}\frac{\det\Lambda}{\vol(K)}\left(\sum_{i=1}^s
 \frac{1}{\lambda_1(K,\Lambda_i)}\right),\quad
\rho=2^n\frac{\det\Lambda}{\vol(K)}.
\end{equation*}
Hence, depending on $\beta$ and $\rho$, we have to determine a
$\gamma\geq 1$ such that $\gamma^n-\gamma^{n-1}\beta -\rho >0$. To
this end let $\overline{\gamma}=\beta+\rho^{1/n}$. Then 
\begin{equation}
\begin{split}
\overline{\gamma}^n-\overline{\gamma}^{n-1}\beta & =
(\beta+\rho^{1/n})^n-(\beta+\rho^{1/n})^{n-1}\beta
\\ &=\rho^{1/n}(\beta+\rho^{1/n})^{n-1} 
 >\rho^{1/n}\rho^{(n-1)/n}=\rho.
\end{split}
\label{eq:strict}
\end{equation}
Finally, we observe that
\begin{equation*}
\overline{\gamma}>\rho^{1/n}=(2^{n}\det\Lambda/\vol(K))^{1/n}\geq
\lambda_1(K,\Lambda)=1
\end{equation*}
 by \eqref{eq:minkowski_first} and our
assumption. Hence, $\overline{\gamma}>1$ and in view of
\eqref{eq:strict} we have $\lambda_1(K,\Lambda\setminus\cup_{i=1}^s\Lambda_i)< 
\overline{\gamma}$ which by the definition of $\overline{\gamma}$
yields the desired bound of the theorem with respect to our normalization 
 $\lambda_1(K,\Lambda)=1$. 
\end{proof}
Compared to the bounds in \eqref{eq:fukshansky} and
\eqref{eq:gaudron},   our
formula uses only the successive minima and not the determinants of the
forbidden sublattices which reflect more the structure of a lattice. 
However, instead of  \eqref{eq:first_lattice} one
can also use a recent Blichfeldt-type bound for $o$-symmetric convex
 bodies $K$ with $\dim(K\cap\Lambda)=n$  due to Henze \cite{Henze:2012tr} 
  \begin{equation*}
  |K\cap \Lambda|\leq
  \frac{n!}{2^n}\frac{\vol(K)}{\det\Lambda}L_n(-2),  
  \label{eq:henze}
  \end{equation*}
  where $L_n(x)$ is the $n$-th Laguerre polynomial. This leads
  to a bound on $\lambda_1(K,\Lambda\setminus\cup_{i=1}^s\Lambda_i)$
  where  the sum over $1/\lambda_1(K,\Lambda_i)$ is
   replaced by a sum over ratios of the type $\vol_{\dim H}(K\cap
   H)/\det(\Lambda_i\cap H)$ for certain lower dimensional planes
   $H\subseteq\lin\Lambda_i$. In general, however, we have no control
   over the dimension of these hyperplanes $H$ nor on the volume of the sections.

Theorem \ref{thm:lower} can easily be extended inductively to higher restricted
successive minima
$\lambda_{j+1}\left(K,\Lambda\setminus\cup_{i=1}^s\Lambda_i\right)$, $1 \leq j \leq n-1$, by
avoiding, in addition, a $j$-dimensional lattice containing $j$ linearly
independent lattice points corresponding to the successive minima 
$\lambda_{i}\left(K,\Lambda\setminus\cup_{i=1}^s\Lambda_i\right)$,
$1\leq i\leq j$. 
\begin{corollary} Under the assumptions of Theorem \ref{thm:lower} we
  have for $j=1\dots,n-1$ 
\begin{equation*}
\begin{split}
 \lambda_{j+1}\left(K,
  \Lambda\setminus\cup_{i=1}^s\Lambda_i\right)&<
6^{n-1}\frac{\det\Lambda}{\lambda_1(K,\Lambda)^{n-2}\vol(K)}\left(\sum_{i=1}^s\frac{1}{\lambda_1(K,\Lambda_i)}\right)\\
&+\left( \frac{3^j}{\lambda_1(K,\Lambda)^j}2^{n-1} \frac{\det\Lambda}{\vol(K)}+\left(2^n \frac{\det\Lambda}{\vol(K)}\right)^{\frac{n-j}{n}}\right)^\frac{1}{n-j}.
\end{split}
\end{equation*}
\end{corollary}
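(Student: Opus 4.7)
The plan is to reduce the corollary to the argument of Theorem~\ref{thm:lower} by adjoining to the given family $\Lambda_1,\dots,\Lambda_s$ one further, tailored $j$-dimensional sublattice. As in the proof of Theorem~\ref{thm:lower}, I normalise so that $\lambda_1(K,\Lambda)=1$. By the very definition of the restricted successive minima one finds linearly independent lattice points $v_1,\dots,v_j\in\Lambda\setminus\cup_{i=1}^s\Lambda_i$ realising the first $j$ restricted minima. I set $\Lambda_{s+1}=\Lambda\cap\lin(v_1,\dots,v_j)$; this is a $j$-dimensional sublattice with $\lambda_1(K,\Lambda_{s+1})\geq\lambda_1(K,\Lambda)=1$, and any lattice point in $\gamma K$ lying outside $\cup_{i=1}^{s+1}\Lambda_i$ is automatically linearly independent from $v_1,\dots,v_j$.

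I then repeat the lattice-point count from the proof of Theorem~\ref{thm:lower} for this enlarged family. The only new ingredient is the contribution of $\Lambda_{s+1}$, which I control via \eqref{eq:first_lattice} applied to $K\cap\lin\Lambda_{s+1}$: for $\gamma\geq 1$,
\begin{equation*}
|\gamma K\setminus\{0\}\cap\Lambda_{s+1}|\leq \left(\frac{2\gamma}{\lambda_1(K,\Lambda_{s+1})}+1\right)^j-1<3^j\gamma^j.
\end{equation*}
Subtracting this additional term in the estimate of the proof of Theorem~\ref{thm:lower} gives, for $\gamma\geq 1$,
\begin{equation*}
\bigl|\gamma K\setminus\{0\}\cap\Lambda\setminus\cup_{i=1}^{s+1}\Lambda_i\bigr| > \frac{\vol(K)}{2^{n-1}\det\Lambda}\bigl(\gamma^n-\gamma^{n-1}\beta-\alpha\gamma^j-\rho\bigr),
\end{equation*}
with $\beta,\rho$ exactly as in that proof and $\alpha=3^j\cdot 2^{n-1}\det\Lambda/\vol(K)$. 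Any $\overline\gamma\geq 1$ making the right-hand side strictly positive will then satisfy $\lambda_{j+1}(K,\Lambda\setminus\cup_{i=1}^s\Lambda_i)\leq\overline\gamma$.

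Mirroring the choice of $\overline\gamma$ made in the proof of Theorem~\ref{thm:lower}, I propose
\begin{equation*}
\overline\gamma=\beta+c,\qquad c=\bigl(\alpha+\rho^{(n-j)/n}\bigr)^{1/(n-j)}.
\end{equation*}
From $\overline\gamma\geq c$ one obtains $\overline\gamma^n-\overline\gamma^{n-1}\beta=\overline\gamma^{n-1}c\geq\overline\gamma^j c^{n-j}=\overline\gamma^j(\alpha+\rho^{(n-j)/n})$, so the required strict positivity reduces to $\overline\gamma^j\rho^{(n-j)/n}>\rho$, i.e., to $\overline\gamma>\rho^{1/n}$. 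Since $j\geq 1$ forces $\alpha>0$, one has $c>\rho^{1/n}$ and hence $\overline\gamma\geq c>\rho^{1/n}$ as needed; moreover $\overline\gamma\geq\rho^{1/n}\geq 1$ by Minkowski's first theorem and the normalisation, so the counting estimates above are legitimate. Unnormalising $\overline\gamma$ by multiplying with $\lambda_1(K,\Lambda)$ produces precisely the bound stated in the corollary.

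The main obstacle is guessing the correct closed form for $\overline\gamma$; once the ansatz above is in hand, the verification of the polynomial inequality is elementary, and the rest of the argument is a direct adaptation of the one already carried out for Theorem~\ref{thm:lower}.
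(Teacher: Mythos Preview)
Your proposal is correct and follows the paper's own argument essentially verbatim: adjoin an auxiliary $j$-dimensional sublattice spanned by vectors realising the first $j$ restricted minima, rerun the lattice-point count of Theorem~\ref{thm:lower} with the extra term $3^j\gamma^j$, and pick $\overline\gamma=\beta+(\alpha+\rho^{(n-j)/n})^{1/(n-j)}$. The only (harmless) deviation is that you take the saturated lattice $\Lambda\cap\lin(v_1,\dots,v_j)$ rather than the lattice generated by the $v_i$'s, which in fact makes the identification $\lambda_{j+1}(K,\Lambda\setminus\cup_{i=1}^s\Lambda_i)=\lambda_1(K,\Lambda\setminus\cup_{i=1}^{s+1}\Lambda_i)$ cleaner and yields the same bound.
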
         
\begin{proof} Let $z_i\in \lambda_{i}\left(K,
  \Lambda\setminus\cup_{i=1}^s\Lambda_i\right)\, K\cap\Lambda$, $1\leq
i\leq j$, be linearly independent, and let
$\overline{\Lambda}\subset\Lambda$ be the $j$-dimensional lattice
generated by these vectors. Then 
\begin{equation}
 \lambda_{j+1}\left(K,
  \Lambda\setminus\cup_{i=1}^s\Lambda_i\right)=\lambda_{1}\left(K,
  \Lambda\setminus(\cup_{i=1}^s\Lambda_i\cup \overline{\Lambda})\right)
\label{eq:succ_sub}
\end{equation}
and we now follow the proof of Theorem \ref{thm:lower}. In
particular, we assume $\lambda_1(K,\Lambda)=1$.  
In addition to the upper bounds on $|\gamma\,K\setminus\{0\}\cap \Lambda_i|$,
$1\leq i\leq s$, in \eqref{eq:ineq_subspaces}, we also use for
$\gamma\geq \lambda_1(K,\overline{\Lambda})
\geq\lambda_1(K,\Lambda)=1$  the bound 
\begin{equation}
|\gamma K\setminus\{0\}\cap\overline{\Lambda}|<
\left(\frac{2\,\gamma}{\lambda_1(K,\overline{\Lambda})}+1\right)^j\leq
3^j \left(\frac{\gamma}{\lambda_1(K,\overline{\Lambda})}\right)^j.
\end{equation}
Combining this bound with  \eqref{eq:vandercorput}
leads for $\gamma\geq \lambda_1(K,\overline{\Lambda})$ to 
\begin{equation*}
\begin{split}
|\gamma K\setminus\{0\}&\cap \Lambda\setminus (\cup_{i=1}^s \Lambda_i\cup\overline{\Lambda})| >
\gamma^n\frac{\vol(K)}{2^{n-1}\det\Lambda}-2 \\ &\hphantom{hugo} - |\gamma\,K\setminus\{0\}\cap
(\cup_{i=1}^s \Lambda_i)|-|\gamma K\setminus\{0\}\cap\overline{\Lambda}|\\
 &> \gamma^n\frac{\vol(K)}{2^{n-1}\det\Lambda} -2 -\gamma^{n-1}\,3^{n-1}\left(\sum_{i=1}^s
 \frac{1}{\lambda_1(K,\Lambda_i)}\right)-3^j \left(\frac{\gamma}{\lambda_1(K,\overline{\Lambda})}\right)^j\\ 
&=\frac{\vol(K)}{2^{n-1}\det\Lambda}\left(\gamma^n-\gamma^{n-1}\,\beta-\gamma^j\alpha-\rho\right),
\end{split}
\end{equation*}
with 
\begin{equation*}
\begin{split}
\beta &= 6^{n-1}\frac{\det\Lambda}{\vol(K)}\left(\sum_{i=1}^s\frac{1}{\lambda_1(K,\Lambda_i)}\right),\quad
\alpha =
\frac{3^j}{\lambda_1(K,\overline{\Lambda})^j}2^{n-1}\frac{\det\Lambda}{\vol(K)},\\ 
\rho &=2^n\frac{\det\Lambda}{\vol(K)}.
\end{split}
\end{equation*}
Setting now
$\overline{\gamma}=\beta+(\alpha+\rho^\frac{n-j}{n})^\frac{1}{n-j}$ we
see as in the proof of Theorem \ref{thm:lower} that 
\begin{equation} 
\begin{split}
  \overline{\gamma}^n-\overline{\gamma}^{n-1}\,\beta-\overline{\gamma}^j\alpha-\rho
  &=\overline{\gamma}^j(\overline{\gamma}^{n-j}-\beta\overline{\gamma}^{n-j-1}-\alpha)-\rho\\ &>
  \overline{\gamma}^j\rho^\frac{n-j}{n}-\rho > 0.
\end{split} 
\label{eq:strict_cor}
\end{equation}
Since $\overline{\gamma}>\beta +\rho^\frac{1}{n}$ which is by the proof of
Theorem \ref{thm:lower} an
upper bound on $\lambda_1(K,\overline{\Lambda})$ we also have 
$\overline{\gamma}>\lambda_1(K,\overline{\Lambda})$ and so we know 
$\lambda_{j+1}(K,\Lambda\setminus\{\cup_{i=1}^s\Lambda_i\})<\overline{\gamma}$
(cf.~\eqref{eq:strict_cor}).  By the definition of $\overline{\gamma}$
we get the required upper bound with respect to the normalization $\lambda_1(K,\Lambda)=1$.
\end{proof}

An upper bound on $\lambda_j(K, \Lambda\setminus\{\cup_{i=1}^s\Lambda_i\})$ of a different kind involves the so-called covering radius 
$\mu(K,\Lambda)$ of a
convex body $K\in\Kon$ and a lattice $\Lambda\in\latn$,
$\rg\Lambda=n$. It is the smallest positive number $\mu$ such that
any translate of $\mu\,K$ contains a lattice point,
i.e. (cf.~\cite[Sec.13, Ch.2]{Gruber:1987vp}), 
\begin{equation*}
 \mu(K,\Lambda)=\min\{\mu>0 : (t+\mu K)\cap\Lambda\ne\emptyset\text{
   for all }t\in\R^n\}.
\end{equation*}

\begin{proposition} Under the assumptions of Theorem \ref{thm:lower}
  we have 
\begin{equation*}
\lambda_{1}(K,\Lambda\setminus(\cup_{i=1}^s\Lambda_i))\leq
(s+1)\,\mu(K,\Lambda) 
\end{equation*}
and hence, $\lambda_{j}(K,\Lambda\setminus(\cup_{i=1}^s\Lambda_i))\leq
(s+2)\,\mu(K,\Lambda)$ for $2\leq j\leq n$.
\end{proposition}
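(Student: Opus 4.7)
The plan for the first inequality is a pigeonhole argument based on the covering radius. Set $\mu=\mu(K,\Lambda)$ and choose a vector $u\in\R^n$ with $\|u\|_K\leq\mu$ and $u\notin\bigcup_{i=1}^{s}\lin\Lambda_i$; the idea is that $u$ should moreover be ``far'' (in the $K$-metric) from every $\lin\Lambda_i$. For each $k=0,1,\dots,s$, the definition of $\mu$ yields a lattice point $v_k\in\Lambda$ with $v_k-ku\in\mu K$, so
\begin{equation*}
\|v_k\|_K\leq k\|u\|_K+\mu\leq (k+1)\mu\leq (s+1)\mu,
\end{equation*}
placing all $s+1$ lattice points $v_0,\dots,v_s$ inside $(s+1)\mu K$. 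If one of them avoids $\bigcup_{i}\Lambda_i$, the first bound follows at once.

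Otherwise, pigeonhole the $s+1$ lattice points into the $s$ forbidden sublattices, finding $j<k$ with $v_j,v_k\in\Lambda_{i^\ast}$. Then $v_k-v_j\in\lin\Lambda_{i^\ast}$, and on the other hand $v_k-v_j=(k-j)u+(e_k-e_j)$ with $e_j,e_k\in\mu K$. These two statements together force $(k-j)u\in\lin\Lambda_{i^\ast}+2\mu K$, i.e.\ $\dist_K(u,\lin\Lambda_{i^\ast})\leq 2\mu/(k-j)$. A sufficiently ``generic'' choice of $u$ contradicts this for every $i^\ast$ and every $j<k$, and hence proves the first inequality. The hard point of the argument is guaranteeing the existence of such a $u$ with $\|u\|_K\leq\mu$ and the required distance to every $\lin\Lambda_i$; this is the main obstacle and is likely to require a volume/averaging argument or a less symmetric placement of the targets than a simple arithmetic progression $ku$.

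The second inequality $\lambda_j(K,\Lambda\setminus\bigcup_i\Lambda_i)\leq (s+2)\mu$ for $2\leq j\leq n$ then follows from the first by the same subspace-enlargement trick already employed in Corollary~2.2. Namely, let $u_1,\dots,u_{j-1}\in\Lambda$ be vectors realising the first $j-1$ restricted successive minima, and let $\overline{\Lambda}\subset\Lambda$ be the sublattice of rank $j-1\leq n-1$ spanned by them. Apply the \emph{already proved} first inequality to the enlarged family of $s+1$ forbidden sublattices $\Lambda_1,\dots,\Lambda_s,\overline{\Lambda}$ (each of rank at most $n-1$). This produces a lattice point $u_j\in\Lambda$ with $u_j\notin\bigcup_i\Lambda_i\cup\overline{\Lambda}$ and
\begin{equation*}
\|u_j\|_K\leq \bigl((s+1)+1\bigr)\mu=(s+2)\mu.
\end{equation*}
Since $u_j\notin\overline{\Lambda}$, it is linearly independent of $u_1,\dots,u_{j-1}$, and together they give the desired $j$ linearly independent lattice points outside $\bigcup_i\Lambda_i$, proving the stated bound on $\lambda_j$.
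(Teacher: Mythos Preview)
Your reduction for the second inequality (adding one more forbidden sublattice of rank $j-1$ and applying the first bound with $s+1$ sublattices) is exactly what the paper does; that part is fine. One small correction: to conclude linear independence from $u_j\notin\overline{\Lambda}$ you must take $\overline{\Lambda}=\Lambda\cap\lin\{u_1,\dots,u_{j-1}\}$, not merely the sublattice \emph{generated} by $u_1,\dots,u_{j-1}$.

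The first inequality, however, has a genuine gap, and it is worse than the ``obstacle'' you flag. Your argument needs a $u\in\mu K$ such that for \emph{every} possible collision pair $j<k$ and every $i$ one has $\dist_K(u,\lin\Lambda_i)>2\mu/(k-j)$. Since pigeonhole may well produce adjacent indices $k-j=1$, you would need $\dist_K(u,\lin\Lambda_i)>2\mu$ for all $i$. But $0\in\lin\Lambda_i$ and $\|u\|_K\le\mu$, so $\dist_K(u,\lin\Lambda_i)\le\mu$ automatically; no such $u$ exists. The arithmetic-progression pigeonhole scheme therefore cannot yield the constant $s+1$ (and does not obviously yield any finite constant) without a substantially different idea.

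The paper supplies that idea via Ball's affine plank theorem for $o$-symmetric bodies: applied to $\mu K$ and the hyperplanes $H_i=\lin\Lambda_i$, it produces a translate $t+\tfrac{1}{s+1}\mu K\subset\mu K$ whose interior misses every $H_i$. Scaling by $s+1+\epsilon$, the body $(s+1+\epsilon)\mu K$ then contains a translate $t_\epsilon+\mu K$ disjoint from all $H_i$; by the definition of the covering radius this translate contains a lattice point of $\Lambda$, which is therefore in $\Lambda\setminus\bigcup_i\Lambda_i$. Letting $\epsilon\to 0$ gives $\lambda_1\le(s+1)\mu$. The plank theorem is precisely the ``volume/averaging'' input you were missing, and it replaces the infeasible distance requirement on a single direction $u$ by the existence of a whole small ball avoiding the hyperplanes.
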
 
\begin{proof} Observe that on account of \eqref{eq:succ_sub} the bound
  for $j\geq 2$ follows from the one for
  $\lambda_{1}(K,\Lambda\setminus\{\cup_{i=1}^s\Lambda_i\})$. For 
  the proof in case $j=1$ let 
  $H_i=\lin\Lambda_i$, $1\leq i\leq s$, and for short we write
  $\overline{\mu}$ instead of $\mu(K,\Lambda)$. By
  Ball's \cite{Ball:1991wn} solution of the affine plank problem for $o$-symmetric convex
  bodies, applied to $\overline{\mu}\,K$, we know that there
  exists a $t\in\R^n$ such that 
\begin{equation*}
   (t+\frac{1}{s+1}\overline{\mu}\,K) \subset \overline{\mu}\,K \quad\text{ and }\quad
   \inte(t+\frac{1}{s+1}\overline{\mu}\,K)\cap  H_i=\emptyset, \,1\leq i\leq s,
\end{equation*}
where $\inte(\cdot)$ denotes the interior. Thus, for any $\epsilon>0$ the
body  $(s+1+\epsilon)\overline{\mu}\,K$
contains a translate $t_\epsilon+\overline{\mu}\,K$ having no  points
in common with $H_i$, $1\leq i\leq s$. Hence, together with the definition of the
covering radius we have $(t_\epsilon+\overline{\mu}K)\cap
\Lambda\setminus(\cup_{i=1}^s\Lambda_i)\ne\emptyset$   and so
$\lambda_{1}(K,\Lambda\setminus(\cup_{i=1}^s\Lambda_i))\leq
(s+1+\epsilon)\overline{\mu}$. By the arbitrariness of $\epsilon$ and the compactness of $K$ the
assertion follows.
\end{proof}
For a comparable uniform bound in the much more general adelic setting
and, of course, with a completely different method see \cite[Proposition 3.2]{Gaudron:2012hq}. 

\section{Avoiding full-dimensional sublattices}\label{sec:full}
If the forbidden sublattices are full-dimensional we  cannot argue 
as in the lower-dimensional case, since now the  number
of forbidden lattice points in $\lambda K\cap(\cup_{i=1}^s\Lambda_i)$
grows with  the same order of magnitude with
respect to $\lambda$ as the number of points in $\lambda\,K\cap\Lambda$. 

The tool
we are using in this full-dimensional case is the torus group  
$\R^n/\overline{\Lambda}$ for a certain lattice
$\overline{\Lambda}$. For a more detailed discussion we refer to \cite[Section 26]{Gruber:2007um}.
We recall that this quotient of abelian groups is
a compact topological group and we may identify
$\R^n/\overline{\Lambda}$ with a fundamental parallelepiped  $P$ of
$\overline{\Lambda}$, i.e., 
\begin{equation*}
  \R^n/\overline{\Lambda}\sim P=\{\rho_1\,b_1+\cdots +\rho_n\,b_n : 0\leq
  \rho_i <1\},
\end{equation*} 
where $b_1,\dots,b_n$ form a basis of $\overline{\Lambda}$. Hence
for $X\subset\R^n$, the set $X$ modulo $\overline{\Lambda}$, i.e.,
$X/\overline{\Lambda}$, can be described (thought of) as 
\begin{equation*}
  X/\overline{\Lambda}=\{y\in P: \exists\, b\in\overline{\Lambda} \text{ s.t. }
 y+b\in X\}=(\overline{\Lambda}+X)\cap P
\end{equation*}
and we can think of $\overline{X}\subseteq\R^n/\overline{\Lambda}$ as its image under inclusion into $\R^n$.
In the same spirit we may identify addition $\oplus$ 
  in $\R^n/\overline{\Lambda}$ with the corresponding operation
  in $\R^n$, i.e., for  $\overline{X}_1,
\overline{X}_2\subset\R^n/\overline{\Lambda}$ we
have 
\begin{equation*}
  \overline{X}_1\oplus \overline{X}_2 = ((\overline{X}_1+
\overline{X}_2)+\overline{\Lambda})\cap P.
\end{equation*}
As $\R^n/\overline{\Lambda}$ is a compact
abelian group, there is a unique Haar measure
$\vol_{T}(\cdot)$ on it  normalized to
$\vol_{T}(\R^n/\overline{\Lambda})=\det\overline{\Lambda}$, and for
a ``nice'' measurable set $X\subset \R^n$ or $\overline{X}\subset
\R^n/\overline{\Lambda}$ we have 
\begin{equation*} 
  \vol_T(X/\overline{\Lambda})=\vol((\overline{\Lambda}+X)\cap P)
  \text{ and } \vol_T(\overline{X})=\vol((\overline{\Lambda}+\overline{X})\cap P).
\end{equation*} 
Regarding the volume of the sum of two sets  $\overline{X}_1, \overline{X}_2\subset\R^n/\overline{\Lambda}$
we have the following classical so-called sum theorem of 
Kneser and Macbeath \cite[Theorem 26.1]{Gruber:2007um}
\begin{equation}
 \vol_T(\overline{X}_1\oplus \overline{X}_2)\geq
 \min\{\vol_T(\overline{X}_1)+\vol_T(\overline{X}_2),\det\overline{\Lambda}\}
.
\label{eq:torus_add}
\end{equation}
We also note that for an $o$-symmetric convex body $K\in\Kon$ and
$\lambda\geq 0$ the set $\overline{\Lambda}+\lambda K$ forms a lattice
packing, i.e., for any two different lattice points
$\overline{a},\overline{b}\in\overline{\Lambda}$ the translates
$\overline{a}+\lambda K$ and $\overline{b}+\lambda K$ do not
overlap, if and only if
$\lambda\leq\lambda_1(K,\overline{\Lambda})/2$. Hence we know for $0\leq \lambda\leq\lambda_1(K,\overline{\Lambda})/2$
\begin{equation}
 \vol_T((\lambda\,K)/\overline{\Lambda}))=\vol((\lambda
 K+\overline{\Lambda})\cap P) =\lambda^n \vol(K).
\label{eq:torus_scal}
\end{equation}
Furthermore, we also need a ``torus version'' of  van der
Corput's result \eqref{eq:vandercorput}  

\begin{lemma} Let $K\in\Kon$, $\Lambda\in\latn$, $\rg\Lambda=n$ and let
  $\overline\Lambda\subsetneq\Lambda$  be a sublattice 
  with $\rg\overline\Lambda=n$, and let $m\in\N$ with $m\det\Lambda<\det\overline\Lambda$. 
  If $\vol_T(\frac{1}{2}K/\overline\Lambda)\geq m\det\Lambda$ then 
\begin{equation*}
      \#\left(K/\overline\Lambda\cap\Lambda\right) \geq m+1, 
\end{equation*}
i.e., $K$ contains at least $m+1$ lattice points of $\Lambda$
belonging to different cosets modulo $\overline\Lambda$.
\label{lem:vandercorputtorus}
\end{lemma}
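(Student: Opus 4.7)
My plan is a Blichfeldt-style pigeonhole argument in the torus $T:=\R^n/\overline{\Lambda}$, which carries the finite subgroup $G:=\Lambda/\overline{\Lambda}$ of order $|G|=\det\overline{\Lambda}/\det\Lambda$; the hypothesis $m\det\Lambda<\det\overline{\Lambda}$ says precisely that $|G|\geq m+1$. Set $\overline{S}:=\frac{1}{2}K/\overline{\Lambda}\subset T$, the (compact) image of the compact set $\frac{1}{2}K$. The assumption $\vol_T(\overline{S})\geq m\det\Lambda$ means that the $|G|$ translates $\overline{S}+g$, $g\in G$, together have Haar mass $|G|\vol_T(\overline{S})\geq m\,\vol_T(T)$, so the multiplicity function $N(\tilde z):=\#\{g\in G:\tilde z\in\overline{S}+g\}$ has average at least $m$ on $T$.

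The first task is to find $\tilde z_0\in T$ with $N(\tilde z_0)\geq m+1$. When $\vol_T(\overline{S})>m\det\Lambda$ is strict the pigeonhole bound immediately produces such a point. In the boundary case $\vol_T(\overline{S})=m\det\Lambda$ I would exploit that $\overline{S}$ is closed, so $N$ is upper semi-continuous; then $N\leq m$ pointwise together with $\int_T N=m\,\vol_T(T)$ would force $N\equiv m$ on $T$, and in particular $\#(G\cap\overline{S})=N(0)=m$ using the symmetry $\overline{S}=-\overline{S}$. Combined with $\overline{S}\oplus\overline{S}=K/\overline{\Lambda}$, the Kneser--Macbeath inequality \eqref{eq:torus_add} and the strict inequality $m\det\Lambda<\det\overline{\Lambda}$ then force a $G$-element of $K/\overline{\Lambda}$ outside $\overline{S}$, contradicting $N\equiv m$.

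Given $\tilde z_0\in T$ with $N(\tilde z_0)\geq m+1$, pick distinct $g_0,\ldots,g_m\in G$ and lifts $x_i\in\frac{1}{2}K$ with $\tilde z_0\equiv x_i+g_i\pmod{\overline{\Lambda}}$. Then $x_i-x_0\in\frac{1}{2}K-\frac{1}{2}K=K$ by the symmetry and convexity of $K$, while $x_i-x_0\equiv g_0-g_i\pmod{\overline{\Lambda}}$ lies in $\Lambda$ since elements of $G=\Lambda/\overline{\Lambda}$ lift to $\Lambda$. Hence $x_0-x_0,x_1-x_0,\ldots,x_m-x_0$ are $m+1$ elements of $K\cap\Lambda$ in $m+1$ pairwise distinct $\overline{\Lambda}$-cosets, as required.

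The hard part is the equality case $\vol_T(\overline{S})=m\det\Lambda$: a straight volume count only gives $N\geq m$ somewhere, and one has to combine the closedness of $\overline{S}$, the strict inequality $m<|G|$ and the symmetric-convex structure of $K$ (via \eqref{eq:torus_add}) to rule out a ``uniform $m$-fold covering'' of $T$ by the $G$-translates of $\overline{S}$.
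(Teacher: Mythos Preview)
Your strict-inequality case is exactly the paper's argument: a Blichfeldt/van der Corput pigeonhole in $\R^n/\overline\Lambda$, followed by taking differences $x_i-x_0\in\frac{1}{2}K-\frac{1}{2}K=K$ to land in the required cosets. The paper phrases it via van der Corput's theorem applied to the set $X=(\frac{1}{2}K+\overline\Lambda)\cap P$, but this is the same computation.

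The gap is in your equality case. From $N\equiv m$ and Kneser--Macbeath you can indeed conclude
\[
\vol_T(K/\overline\Lambda)\ \geq\ \min\{2m\det\Lambda,\det\overline\Lambda\}\ >\ m\det\Lambda\ =\ \vol_T(\overline S),
\]
so $K/\overline\Lambda\supsetneq\overline S$ in measure. But this does not produce an element of $G$ in $K/\overline\Lambda\setminus\overline S$: Kneser--Macbeath controls volumes, not intersections with the finite set $G$, and the $m$ elements of $G\cap\overline S$ might well form a subgroup closed under addition. If instead you repeat the pigeonhole with $K/\overline\Lambda$ in place of $\overline S$, you do get a point covered $m+1$ times by the $G$-translates of $K/\overline\Lambda$; but the resulting differences lie only in $K-K=2K$, not in $K$, so the conclusion does not follow. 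And even if you had such a $g_0\in G\cap(K/\overline\Lambda)\setminus\overline S$, that would directly give $\#(G\cap K/\overline\Lambda)\geq m+1$ rather than ``contradict $N\equiv m$''.

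The paper avoids all of this with a one-line reduction: by compactness of $K$ and discreteness of $\Lambda$ one may assume the strict inequality $\vol_T(\frac{1}{2}K/\overline\Lambda)>m\det\Lambda$. Concretely, apply the strict case to $(1+\epsilon)K$, obtain $m+1$ cosets represented in $(1+\epsilon)K\cap\Lambda$, and let $\epsilon\to 0$; since only finitely many cosets and finitely many lattice points of $2K$ are involved, the representatives stabilise and lie in $K$.

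If you prefer to salvage your own line, drop Kneser--Macbeath and argue topologically: if $N\equiv m$, then $\overline S$ has no boundary points in $T$. Indeed, a boundary point $q\in\overline S$ is a limit of points $q_k\notin\overline S$; each $q_k$ lies in $\overline S+g$ for $m$ values $g\in G\setminus\{0\}$, and passing to a subsequence with a fixed such $m$-set $I$ and using closedness gives $q\in\bigcap_{g\in I}(\overline S+g)$, hence $N(q)\geq m+1$. Thus $\overline S$ is clopen in the connected torus $T$, forcing $\overline S=T$ and contradicting $\vol_T(\overline S)=m\det\Lambda<\det\overline\Lambda$.
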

\begin{proof}  By the compactness of $K$ and the discreteness of
   lattices we may assume $\vol_T(\frac{1}{2}K/\overline\Lambda)> m\det\Lambda$.
  Let $P$ be a fundamental parallelepiped of the lattice
  $\overline\Lambda$. Then by assumption we have for the measurable
  set $X=(\frac{1}{2}K+\overline\Lambda)\cap P$ that $\vol X>
  m\det\Lambda$. According to a result of van der Corput \cite[Theorem
  1, Sec 6.1]{Gruber:1987vp} we know that there exists pairwise
  different $x_i\in X$, $1\leq i\leq m+1$, such that $x_i-x_j\in
  \Lambda$. By the $o$-symmetry and convexity of $K$ we have
  $(X-X)=(K+\overline\Lambda)\cap (P-P)$  and since $(P-P)\cap\overline\Lambda=\{0\}$ we conclude  
\begin{equation*}
      x_i-x_j\in (K+\overline\Lambda)\cap \Lambda\setminus\overline\Lambda,\quad i\ne j.  
\end{equation*}
Hence the $m$ points $x_i-x_1\in K+\overline\Lambda$, $i=2,\dots,m+1$,
belong to different non-trivial cosets of $\Lambda$ modulo $\overline\Lambda$ and
thus $\#\left(K/\overline\Lambda\cap\Lambda\right) \geq m+1$, where
the additional $1$ counts the origin.
\end{proof}

The next lemma states some simple facts on the intersection of
full-dimensional sublattices. 
\begin{lemma}  Let $\Lambda\in\latn$, $\Lambda_i\subseteq\Lambda$,
  $1\leq i\leq s$, $\rg\Lambda_i=\rg\Lambda=n$, and  let
  $\overline{\Lambda}=\cap_{i=1}^n\Lambda_i$. Then $\overline{\Lambda}\in\latn$  with $\rg\overline\Lambda=n$, and   
\begin{equation*}
\max_{1\leq i \leq
  s}\det\Lambda_i\leq\det\overline{\Lambda}\leq(\det\Lambda)^{1-s}\det\Lambda_1\cdot\ldots\cdot\det\Lambda_s.
\end{equation*}
Moreover, with
$m=\sum_{i=1}^s\frac{\det\overline\Lambda}{\det\Lambda_i}-s+1$ we have 
\begin{enumerate}
\item[\rm i)] The union $\cup_{i=1}^s\Lambda_i$ is covered by at most $m$
  cosets of $\Lambda$ modulo $\overline\Lambda$. 
\item[{\rm ii)}] If $\frac{\det\overline{\Lambda}}{\det\Lambda} \geq m+1$ then  $\Lambda \neq \bigcup_{i=1}^s \Lambda_i$.
\end{enumerate}
\label{lem:lattice_cap}
\end{lemma}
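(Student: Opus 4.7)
The plan is to reduce the lemma to the standard index identity $[\Lambda:\Lambda']=\det\Lambda'/\det\Lambda$ for full-rank sublattices $\Lambda'\subseteq\Lambda$. To begin, I would introduce the diagonal homomorphism $\varphi\colon\Lambda\to\prod_{i=1}^s\Lambda/\Lambda_i$, $\varphi(x)=(x+\Lambda_i)_{i=1}^s$, whose kernel is exactly $\overline\Lambda$. Since each $\Lambda_i$ has full rank, each quotient $\Lambda/\Lambda_i$ is finite, so $\Lambda/\overline\Lambda$ injects into a finite abelian group. This immediately yields $\rg\overline\Lambda=n$ together with
\[
[\Lambda:\overline\Lambda]\leq\prod_{i=1}^s[\Lambda:\Lambda_i],
\]
which translates into $\det\overline\Lambda\leq(\det\Lambda)^{1-s}\prod_i\det\Lambda_i$. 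The lower bound is even easier: $\overline\Lambda\subseteq\Lambda_i$ is a full-rank sublattice, so $[\Lambda_i:\overline\Lambda]=\det\overline\Lambda/\det\Lambda_i$ is a positive integer, and hence $\det\Lambda_i\leq\det\overline\Lambda$ for each $i$.

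For part (i), the plan is to exploit the chain $\overline\Lambda\subseteq\Lambda_i\subseteq\Lambda$ to decompose each $\Lambda_i$ into $\overline\Lambda$-cosets. Explicitly, $\Lambda_i$ is a disjoint union of $[\Lambda_i:\overline\Lambda]=\det\overline\Lambda/\det\Lambda_i$ cosets of $\overline\Lambda$ in $\Lambda$, and each of these cosets lies entirely inside $\Lambda_i$. When I sum these counts over $i$, the trivial coset $\overline\Lambda$ itself is counted once in every $\Lambda_i$ and therefore contributes an overcount of $s-1$; any further overlaps only sharpen the bound. Consequently $\cup_{i=1}^s\Lambda_i$ occupies at most
\[
\sum_{i=1}^s\frac{\det\overline\Lambda}{\det\Lambda_i}-(s-1)=m
\]
cosets of $\Lambda$ modulo $\overline\Lambda$.

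Part (ii) is then an immediate pigeonhole count: $\Lambda$ itself splits into exactly $[\Lambda:\overline\Lambda]=\det\overline\Lambda/\det\Lambda$ cosets of $\overline\Lambda$. If this number is at least $m+1$, then by (i) at least one coset is missed by $\cup_{i=1}^s\Lambda_i$, forcing $\Lambda\ne\cup_{i=1}^s\Lambda_i$. I do not anticipate any serious obstacle in this proof; the only point requiring a bit of care is the deduplication of the zero coset in (i), but this reduces to the elementary observation that it is the unique coset guaranteed to lie in every $\Lambda_i$.
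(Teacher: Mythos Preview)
Your proposal is correct and follows essentially the same approach as the paper: for (i) you count the $\det\overline\Lambda/\det\Lambda_i$ cosets of $\overline\Lambda$ inside each $\Lambda_i$ and subtract $s-1$ for the overcounted zero coset, and (ii) is the immediate pigeonhole consequence, exactly as in the paper. The only cosmetic difference is that for full rank and the upper determinant bound you package the argument via the diagonal homomorphism $\Lambda\to\prod_i\Lambda/\Lambda_i$, whereas the paper exhibits explicit vectors $(\det\Lambda_2)a_j\in\Lambda_1\cap\Lambda_2$ for full rank and phrases the index bound as ``two points lie in different $\overline\Lambda$-cosets iff they differ modulo some $\Lambda_i$''---which is precisely the injectivity of your $\varphi$.
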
 
\begin{proof} In order to show that $\overline\Lambda$ is a
  full-dimensional lattice  it suffices to consider $s=2$. Obviously,
  $\Lambda_1\cap\Lambda_2$ is a discrete subgroup  of $\Lambda$ and it
  also 
  contais $n$ linearly independent points, e.g.,
  $(\det\Lambda_2)a_1,\dots,  (\det\Lambda_2)a_n$, where
  $a_1,\dots,a_n$ is a basis of $\Lambda_1$. Hence $\overline\Lambda$ is
  a full-dimensional lattice, cf.~\cite[Theorem 2, Sec 3.2]{Gruber:1987vp}.   The lower
  bound on $\det\overline\Lambda$  is clear by the inclusion
  $\overline\Lambda\subseteq\Lambda_i$, $1\leq i\leq s$. For the
  upper bound we observe that two points $g,h \in\Lambda$ belong to
  different cosets modulo $\overline\Lambda$ if and only if $g$ and $h$ belong
  to different cosets of $\Lambda$ modulo at least one $\Lambda_i$. 
   There are $\det\Lambda_i/\det\Lambda$ many cosets for each $i$ and so we get the upper bound.  

For i) we note that    $\Lambda_i$ is the union of
$\det\overline\Lambda/\det\Lambda_i$ many cosets  modulo
$\overline\Lambda$, the union is certainly covered by
$\sum_{i=1}^s\frac{\det\overline\Lambda}{\det\Lambda_i}=m+s-1$ many cosets
of $\Lambda$ modulo $\overline\Lambda$. But here we have counted the
trivial coset at least $s$ times. ii) is a direct consequence of i).
\end{proof}
Lemma \ref{lem:lattice_cap} ii) implies, in particular,  that the  
 union of two strict sublattices can never be the whole lattice. This is
 no longer true for three sublattices, as we see in the next example, which also shows that
 Lemma \ref{lem:lattice_cap} ii) is not an equivalence. 
\begin{example}
Let $\Lambda=\Z^2$, and let $\Lambda_1,\ldots,\Lambda_4\subset\Z^2$ be
  the sublattices 
\begin{equation*}
\Lambda_1=\{(z_1,z_2)^\intercal\in\Z^2: z_2\equiv
  0\bmod 2\},\, \Lambda_2=\{(z_1,z_2)^\intercal\in\Z^2: z_1\equiv
  0\bmod 2\},
\end{equation*}
and
\begin{equation*}
\Lambda_3=\{(z_1,z_2)^\intercal\in\Z^2: z_2\equiv
  0 \bmod 3\},\, \Lambda_4=\{(z_1,z_2)^\intercal\in\Z^2: z_1\equiv
  z_2 \bmod 2\}.
\end{equation*}

Then $\Lambda_1\cup\Lambda_2\cup \Lambda_4=\Lambda$ but $\Lambda_1\cup\Lambda_2\cup \Lambda_3\neq\Lambda$.
Furthermore $\det\Lambda=1$, $\det\Lambda_1=\Lambda_2=\Lambda_4=2$, $\det\Lambda_4=3$
and 
\begin{equation*}
  \overline{\Lambda}=\Lambda_1\cap\Lambda_2\cap\Lambda_3=\{(z_1,z_2)^\intercal\in\Z^2: z_1\equiv
  0\bmod 2,z_2\equiv 0\bmod 6\}
\end{equation*}
with $\det\overline{\Lambda}=12$, while $ \sum_{i=1}^3
\frac{\det\overline{\Lambda}}{\det\Lambda_i} -1=14$.
\end{example}

We now come to the proof of the full-dimensional case.
\begin{proof}[Proof of Theorem \ref{thm:full}] Let
  $\Lambda_1,\dots,\Lambda_s$ be the  full-dimensional
  forbidden sublattices of the given lattice $\Lambda$ and let
  $\overline{\Lambda}=\cap_{i=1}^s\Lambda _i$. Let 
 \begin{equation*}
         m=\min\left\{\sum_{i=1}^s\frac{\det\overline\Lambda}{\det\Lambda_i}-s+1, \frac{\det\overline\Lambda}{\det\Lambda}\right\}.
 \end{equation*}
We first claim 
\begin{claim} Let $\lambda>0$ with 
  $\vol_T((\lambda\,\frac{1}{2}K)/\overline\Lambda)\geq m\det\Lambda$. Then 
\begin{equation*}
\lambda_1(K,\Lambda\setminus\cup_{i=1}^s\Lambda_i)\leq\lambda.
\label{eq:upper_bound_full}
\end{equation*}
\label{claim:one}
\end{claim}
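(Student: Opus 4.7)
My plan is to combine the torus van der Corput bound (Lemma \ref{lem:vandercorputtorus}) with the coset-counting estimate (Lemma \ref{lem:lattice_cap}(i)). Applied to $\lambda K$, Lemma \ref{lem:vandercorputtorus} will guarantee that $\lambda K \cap \Lambda$ meets many distinct cosets of $\Lambda$ modulo $\overline\Lambda$, whereas Lemma \ref{lem:lattice_cap}(i) bounds the number of cosets occupied by the forbidden set $\cup_{i=1}^s \Lambda_i$. If the first count strictly exceeds the second, then $\lambda K$ must contain a nonzero lattice point outside $\cup_{i=1}^s \Lambda_i$.

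Write $m_1 = \sum_{i=1}^s \det\overline\Lambda/\det\Lambda_i - s + 1$ and $m_2 = \det\overline\Lambda/\det\Lambda$, so that $m = \min\{m_1, m_2\}$. By Lemma \ref{lem:lattice_cap}(i), $\cup_i \Lambda_i$ occupies at most $m_1$ cosets of $\Lambda$ modulo $\overline\Lambda$. Since each $\Lambda_i$ contains $\overline\Lambda$, it is a union of such cosets, so the hypothesis $\cup_i \Lambda_i \neq \Lambda$ of Theorem \ref{thm:full} forces $\cup_i \Lambda_i$ to miss at least one coset, i.e., to occupy at most $m_2 - 1$ of them. Therefore $\cup_i \Lambda_i$ occupies at most $m' := \min\{m_1, m_2 - 1\} \leq m$ cosets of $\Lambda$ modulo $\overline\Lambda$, and the trivial coset is one of them since $0 \in \cup_i \Lambda_i$.

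Next I apply Lemma \ref{lem:vandercorputtorus} to the body $\lambda K$ with the integer $m'$. The volume hypothesis $\vol_T((\tfrac{1}{2}\lambda K)/\overline\Lambda) \geq m'\det\Lambda$ follows from the assumption of the claim together with $m' \leq m$, and the strict inequality $m'\det\Lambda < \det\overline\Lambda$ holds because $m' \leq m_2 - 1$. The lemma then yields $m' + 1$ points of $\lambda K \cap \Lambda$ lying in distinct cosets modulo $\overline\Lambda$, one of which is the trivial coset. Since at most $m'$ of those cosets are contained in $\cup_i \Lambda_i$, at least one nontrivial coset is represented by a point of $\lambda K \cap (\Lambda \setminus \cup_{i=1}^s \Lambda_i)$. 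This produces the required nonzero lattice point and hence $\lambda_1(K, \Lambda \setminus \cup_{i=1}^s \Lambda_i) \leq \lambda$.

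The main technical subtlety is that Lemma \ref{lem:vandercorputtorus} demands the strict bound $m\det\Lambda < \det\overline\Lambda$, which can fail in the boundary case $m = m_2$. Passing to the auxiliary value $m' = \min\{m_1, m_2 - 1\}$ resolves this, and it is precisely the hypothesis $\cup_i \Lambda_i \neq \Lambda$ from Theorem \ref{thm:full} that secures the bound $m' \leq m_2 - 1$ needed to decouple the argument from the potentially non-strict inequality $m \leq m_2$.
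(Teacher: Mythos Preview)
Your proof is correct and follows the same combinatorial strategy as the paper: pigeonhole on cosets of $\Lambda$ modulo $\overline\Lambda$, using Lemma~\ref{lem:vandercorputtorus} to produce many represented cosets and Lemma~\ref{lem:lattice_cap}(i) to bound the number occupied by $\cup_i\Lambda_i$. The one genuine difference lies in how the boundary case $m=m_2=\det\overline\Lambda/\det\Lambda$ is treated. The paper splits into two cases: when $m=m_1<m_2$ it applies Lemma~\ref{lem:vandercorputtorus} directly, and when $m=m_2$ it argues that $\vol_T((\tfrac{\lambda}{2}K)/\overline\Lambda)=\det\overline\Lambda$ forces $(\tfrac{\lambda}{2}K)/\overline\Lambda$ to be the full torus, so $\lambda K$ meets every coset. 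You instead introduce $m'=\min\{m_1,m_2-1\}\le m$ and observe that the hypothesis $\cup_i\Lambda_i\neq\Lambda$ already caps the number of occupied cosets at $m_2-1$; this lets you invoke Lemma~\ref{lem:vandercorputtorus} uniformly with $m'$, since the strict inequality $m'\det\Lambda<\det\overline\Lambda$ now always holds. Your route is slightly cleaner in that it avoids the measure-theoretic full-torus step, at the cost of introducing the auxiliary quantity $m'$; the paper's route makes the role of the two regimes more explicit. Both arguments rest on exactly the same two lemmas.
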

In order to verify the claim,  we first assume
$m=\sum_{i=1}^s\frac{\det\overline\Lambda}{\det\Lambda_i}-s+1<\det\overline\Lambda/\det\Lambda$. 
By  Lemma \ref{lem:vandercorputtorus} $\lambda\,K$ contains $m+1$ lattice
points of $\Lambda$ belonging to different cosets with respect to
$\overline\Lambda$. 
By Lemma \ref{lem:lattice_cap} i), $\cup_{i=1}^s\Lambda_i$ is covered by at
most $m$ cosets of $\Lambda$ modulo $\overline\Lambda$, and thus, $\lambda\,K$
contains a lattice point of $\Lambda\setminus\cup_{i=1}^s\Lambda_i$. 

Next suppose that $m=\det\overline\Lambda/\det\Lambda$. Then
$\vol_T((\lambda\,\frac{1}{2}K)/\overline\Lambda)=\det\overline\Lambda=\vol_T(\R^n/\overline{\Lambda})$
and, in particular, $\lambda\,K$ contains a representative  
 of each coset of
$\Lambda$ modulo $\overline{\Lambda}$. By assumption there exists a
coset containing a point $a\in\Lambda\setminus\cup_{i=1}^s\Lambda_i$ and
hence, all points of this coset, i.e., $a+\overline{\Lambda}$, 
lie in $\Lambda\setminus\cup_{i=1}^s\Lambda_i$.

This verifies the claim and it remains to compute a $\lambda$
with 
\begin{equation}
\vol_T((\lambda\,\tfrac{1}{2}K)/\overline\Lambda)\geq m\det\Lambda.
\label{eq:to_show}
\end{equation}
 To this end we set
$\lambda_1=\lambda_1(K,\overline\Lambda)$ and we write an arbitrary
$\lambda>0$ modulo $\lambda_1$ in the form 
$\lambda=\left\lfloor\frac{\lambda}{\lambda_1}\right\rfloor \lambda_1 +
\rho\,\lambda_1$, 
with $0\leq \rho<1$.  Hence, in view of the sum theorem of Kneser and
Macbeath \eqref{eq:torus_add} and the packing property \eqref{eq:torus_scal} of $\lambda_1$
with respect to $\frac{1}{2}K$, we may write 
\begin{equation}
\begin{split}
 \vol_T\left((\lambda\,\frac{1}{2}K)/\overline\Lambda\right)&=
\vol_T\left(\left(\left(\left\lfloor\frac{\lambda}{\lambda_1}\right\rfloor
  \frac{\lambda_1}{2} + \rho\,\frac{\lambda_1}{2}\right)
\,K\right)/\overline\Lambda\right)\\
&\geq \vol_T\left(\underbrace{\left(\frac{\lambda_1}{2}K\right)/\overline\Lambda\oplus\cdots
  \oplus
  \left(\frac{\lambda_1}{2}K\right)/\overline\Lambda}_{\lfloor\lambda/\lambda_1\rfloor}\oplus
\left(\frac{\rho\lambda_1}{2}K\right)/\overline\Lambda\right)\\
&\geq
\min\left\{\left(\left\lfloor\frac{\lambda}{\lambda_1}\right\rfloor+\rho^n\right)\left(\frac{\lambda_1}{2}\right)^n\vol(K),
  \det\overline\Lambda\right\}.
\end{split}
\label{eq:chain_torus}
\end{equation}
Thus, \eqref{eq:to_show} is certainly satisfied for a $\overline\lambda$ with  
\begin{equation*}
\left(\left\lfloor\frac{\overline\lambda}{\lambda_1}\right\rfloor+\rho^n\right)\left(\frac{\lambda_1}{2}\right)^n\vol(K)
=\left(\sum_{i=1}^s\frac{\det\overline\Lambda}{\det\Lambda_i}-s+1\right)\det\Lambda. 
\end{equation*} 
Using 
\begin{equation}
\left\lfloor\frac{\lambda}{\lambda_1}\right\rfloor+\rho^n > 
\frac{\lambda-\lambda_1}{\lambda_1}
\label{eq:ineq}
\end{equation} 
we find  $\lambda_1(K,\Lambda\setminus\cup_{i=1}^s\Lambda_i)\leq\overline\lambda<\frac{2^n\det\Lambda}{\lambda_1^{n-1}\vol(K)}
\biggl(\sum_{i=1}^s \frac{\det\overline{\Lambda}}{\det\Lambda_i} -s+1
\biggr)+
\lambda_1$.
\end{proof}

\begin{remark} The bound in Theorem \ref{thm:full} can slightly be
  improved in lower dimensions by noticing that in \eqref{eq:ineq}
  we may replace $\frac{\lambda-\lambda_1}{\lambda_1}$
  by  
\begin{equation*}
\frac{\lambda}{\lambda_1}-\rho+\rho^n.
\end{equation*} 
Since $\rho-\rho^n$ takes its maximum at $\rho=(1/n)^{1/(n-1)}$ we get in
this way 
\begin{equation*}
\begin{split}
\lambda_1\left(K,
  \Lambda\setminus\cup_{i=1}^s\Lambda_i\right)& \leq \frac{2^n\det\Lambda}{\lambda_1(K,\overline{\Lambda})^{n-1}\vol(K)}
\biggl(\sum_{i=1}^s \frac{\det\overline{\Lambda}}{\det\Lambda_i} -s+1
\biggr)\\ &+
n^{\frac{-1}{n-1}}\left(\frac{n^n-1}{n^n}\right)\lambda_1(K,\overline{\Lambda}).
\end{split}
\end{equation*}
\end{remark}
Since we certainly have 
\begin{equation*}
\lambda_i\left(K, \Lambda\setminus\cup_{i=1}^s\Lambda_i\right)\leq
\lambda_1\left(K,
  \Lambda\setminus\cup_{i=1}^s\Lambda_i\right)  + \lambda_{i-1}(K,\overline{\Lambda}),
\end{equation*}
we have the following straightforward extension of Theorem \ref{thm:full} to
higher successive minima 
\begin{corollary} Under the assumptions of Theorem \ref{thm:full} we
  have for $1\leq i\leq n$. 
\begin{equation*}
\begin{split}
\lambda_i\left(K,
  \Lambda\setminus\cup_{i=1}^s\Lambda_i\right)&\leq \frac{2^n\det\Lambda}{\lambda_1(K,\overline{\Lambda})^{n-1}\vol(K)}
\biggl(\sum_{i=1}^s \frac{\det\overline{\Lambda}}{\det\Lambda_i} -s+1
\biggr)\\ &+
\lambda_1(K,\overline{\Lambda})+\lambda_{i-1}(K,\overline{\Lambda}),
\end{split}
\end{equation*}
where we set $\lambda_{0}(K,\overline{\Lambda})=0$.
\end{corollary}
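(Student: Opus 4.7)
The plan is to derive the corollary immediately from Theorem~\ref{thm:full} via the ``triangle-type'' inequality displayed just before the statement, namely
\begin{equation*}
\lambda_i\bigl(K, \Lambda \setminus \cup_{j=1}^s \Lambda_j\bigr) \leq \lambda_1\bigl(K, \Lambda \setminus \cup_{j=1}^s \Lambda_j\bigr) + \lambda_{i-1}(K, \overline{\Lambda}),
\end{equation*}
where the convention $\lambda_0(K, \overline{\Lambda}) = 0$ covers the base case $i=1$. Once this inequality is in place, Theorem~\ref{thm:full} applied to the first summand supplies exactly the main quantitative term together with the correction $\lambda_1(K, \overline{\Lambda})$; adding the residual $\lambda_{i-1}(K, \overline{\Lambda})$ then reproduces the claimed estimate. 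All of the content of the proof is therefore packed into this auxiliary inequality.

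To verify it, I would first pick a lattice point $z \in (\Lambda \setminus \cup_j \Lambda_j) \cap \lambda_1(K, \Lambda \setminus \cup_j \Lambda_j)\,K$ realizing the first restricted successive minimum, together with linearly independent vectors $b_1, \ldots, b_{i-1} \in \overline{\Lambda}$ satisfying $b_k \in \lambda_k(K, \overline{\Lambda})\,K$ for $1 \leq k \leq i-1$; these exist because $\rg \overline{\Lambda} = n \geq i-1$. Setting $z_0 = z$ and $z_k = z + b_k$ for $1 \leq k \leq i-1$, each $z_k$ lies in $\Lambda \setminus \cup_j \Lambda_j$: indeed $b_k \in \overline{\Lambda} \subseteq \Lambda_j$ for all $j$, so an inclusion $z_k \in \Lambda_j$ would force $z = z_k - b_k \in \Lambda_j$ by subgroup closure, contradicting the choice of $z$. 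Moreover the $o$-symmetry and convexity of $K$ guarantee that each $z_k$ lies in $(\lambda_1 + \lambda_{i-1}(K, \overline{\Lambda}))\,K$, which is the target scaling.

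The main obstacle I anticipate is confirming that the $i$ vectors $z_0, z_1, \ldots, z_{i-1}$ can be taken linearly independent: after subtracting $z_0$ from each of the others, this set spans $\lin\{z, b_1, \ldots, b_{i-1}\}$, which is $i$-dimensional precisely when $z \notin \lin\{b_1, \ldots, b_{i-1}\}$. Generically this condition holds and the construction is immediate. In the degenerate case where $z$ lies in the $(i-1)$-dimensional span of the chosen $b_k$'s, I would exploit the non-uniqueness of the vectors realizing the successive minima of $\overline{\Lambda}$: the lattice points of $\overline{\Lambda}$ inside $\lambda_{i-1}(K, \overline{\Lambda})\,K$ span at least an $(i-1)$-dimensional subspace, and one can reshuffle the choice of $b_k$'s so that at least one of them leaves the hyperplane containing $z$ and the previously chosen vectors, restoring the required independence without enlarging any norm beyond $\lambda_{i-1}(K, \overline{\Lambda})$. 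With linear independence secured, the auxiliary inequality follows, and applying Theorem~\ref{thm:full} to bound its first summand finishes the proof.
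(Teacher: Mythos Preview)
Your overall strategy coincides with the paper's: it, too, simply combines Theorem~\ref{thm:full} with the displayed inequality
\[
\lambda_i\bigl(K,\Lambda\setminus\cup_{j}\Lambda_j\bigr)\leq \lambda_1\bigl(K,\Lambda\setminus\cup_{j}\Lambda_j\bigr)+\lambda_{i-1}(K,\overline{\Lambda}),
\]
which the paper asserts without proof. Your attempt to justify this inequality is therefore work beyond what the paper supplies.

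The obstacle you flag is real, and your proposed ``reshuffling'' does not close it. If the lattice points of $\overline{\Lambda}$ inside $\lambda_{i-1}(K,\overline{\Lambda})\,K$ span a subspace $W$ of dimension exactly $i-1$ and $z\in W$, then every admissible choice of $b_1,\dots,b_{i-1}$ lies in $W$ and no reshuffling can make $\{z,z+b_1,\dots,z+b_{i-1}\}$ span more than $W$. In fact the auxiliary inequality (and with it the corollary as stated) is false. Take $n=2$, $i=2$, $\Lambda=\Z^2$, $s=1$, $\Lambda_1=\overline{\Lambda}=2\Z\times\Z$, and $K=[-M,M]\times[-\epsilon,\epsilon]$ with $M$ large and $\epsilon$ small. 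Then $\lambda_1(K,\overline{\Lambda})=2/M$ is realized only along the $x$-axis, $z=(1,0)$ gives $\lambda_1(K,\Lambda\setminus\Lambda_1)=1/M$, while any second independent point with odd first coordinate has $K$-norm at least $1/\epsilon$, so $\lambda_2(K,\Lambda\setminus\Lambda_1)=1/\epsilon$. The inequality would read $1/\epsilon\leq 3/M$, and the corollary's bound becomes $1/(2\epsilon)+4/M$; both fail. A correct version replaces $\lambda_{i-1}(K,\overline{\Lambda})$ by $\lambda_i(K,\overline{\Lambda})$: with $b_1,\dots,b_i\in\overline{\Lambda}$ linearly independent and $b_k\in\lambda_k(K,\overline{\Lambda})K$, the $i+1$ points $z,z+b_1,\dots,z+b_i$ all lie in $\Lambda\setminus\cup_j\Lambda_j$, all lie in $(\lambda_1+\lambda_i(K,\overline{\Lambda}))K$, and span $\lin\{z,b_1,\dots,b_i\}$, which has dimension at least $i$; hence $i$ of them are independent.
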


In the case $s=1$ one can state a slightly better  bound 
\begin{corollary} Under the assumptions of Corollary \ref{cor:onefull} we
  have for $1\leq i\leq n$. 
\begin{equation*}
\begin{split}
\lambda_i\left(K,
  \Lambda\setminus\Lambda_1\right)&\leq
\frac{2^n\det\Lambda}{\lambda_1(K,\Lambda_1)^{n-1}\vol(K)} +
\lambda_1(K,\Lambda)+\lambda_{i}(K,\Lambda). 
\end{split}
\end{equation*}
\end{corollary}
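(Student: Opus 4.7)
The plan is to reduce the problem to the already-established case $i=1$ from Corollary \ref{cor:onefull}. Concretely, I will first prove the auxiliary bound
\begin{equation*}
\lambda_i(K,\Lambda\setminus\Lambda_1) \leq \lambda_1(K,\Lambda\setminus\Lambda_1) + \lambda_i(K,\Lambda), \qquad 1\leq i\leq n,
\end{equation*}
and then substitute the estimate from Corollary \ref{cor:onefull} for the first term on the right.

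For the auxiliary bound I would pick a witness $z\in(\Lambda\setminus\Lambda_1)\cap\lambda_1(K,\Lambda\setminus\Lambda_1)\,K$ together with linearly independent vectors $v_1,\ldots,v_i\in\Lambda$ realising the first $i$ successive minima of $(K,\Lambda)$, so that in particular each $v_j$ lies in $\lambda_i(K,\Lambda)\,K$. For each $j$ I then set $\tilde v_j=v_j$ if $v_j\notin\Lambda_1$ and $\tilde v_j=v_j+z$ if $v_j\in\Lambda_1$. Since $\Lambda_1$ is an additive subgroup and $z\notin\Lambda_1$, this forces $\tilde v_j\in\Lambda\setminus\Lambda_1$, while subadditivity of the gauge function of $K$ gives $\tilde v_j\in\bigl(\lambda_1(K,\Lambda\setminus\Lambda_1)+\lambda_i(K,\Lambda)\bigr)\,K$.

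The main obstacle is the linear-algebra step: extracting $i$ linearly independent vectors from the $i+1$ candidates $\{z,\tilde v_1,\ldots,\tilde v_i\}$. The key observation will be that each $\tilde v_j$ equals $v_j$ or $v_j+z$, so $\lin(z,\tilde v_1,\ldots,\tilde v_i)=\lin(z,v_1,\ldots,v_i)$, which has dimension at least $i$ by linear independence of the $v_j$. Hence any maximal linearly independent subfamily of $\{z,\tilde v_1,\ldots,\tilde v_i\}$ has at least $i$ elements, producing the required $i$ linearly independent lattice points in $\Lambda\setminus\Lambda_1$ lying inside $\bigl(\lambda_1(K,\Lambda\setminus\Lambda_1)+\lambda_i(K,\Lambda)\bigr)\,K$. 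Plugging the Corollary \ref{cor:onefull} bound on $\lambda_1(K,\Lambda\setminus\Lambda_1)$ into the auxiliary inequality then yields the statement; no further estimates are needed.
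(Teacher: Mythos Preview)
Your proposal is correct and follows essentially the same approach as the paper: both reduce to the auxiliary inequality $\lambda_i(K,\Lambda\setminus\Lambda_1)\le \lambda_1(K,\Lambda\setminus\Lambda_1)+\lambda_i(K,\Lambda)$ via the shift $v_j\mapsto v_j+z$ for those $v_j\in\Lambda_1$, and then invoke Corollary~\ref{cor:onefull}. The only cosmetic difference is that the paper works with all $n$ successive-minima vectors $b_1,\dots,b_n$ and singles out an index $k$ with $a\notin\lin(\{b_1,\dots,b_n\}\setminus\{b_k\})$ to produce the independent family explicitly, whereas you take only $v_1,\dots,v_i$ and use the dimension count $\dim\lin(z,\tilde v_1,\dots,\tilde v_i)=\dim\lin(z,v_1,\dots,v_i)\ge i$; both arguments are equivalent.
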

\begin{proof} In view of Corollary \ref{cor:onefull} it
  suffices to show  $\lambda_i\left(K,
  \Lambda\setminus\Lambda_1\right)\leq  
\lambda_1(K,\Lambda\setminus\Lambda_1)+\lambda_{i}(K,\Lambda)$ for
$i=2,\dots,n$. To this end let $a\in
\lambda_1(K,\Lambda\setminus\Lambda_1)\,K\cap\Lambda$ and let
$b_1,\dots,b_n$ be linearly independent with $b_j\in
\lambda_j(K,\Lambda)\,K\cap\Lambda$, $j=1,\dots,n$. Since not both, $b_j$ and 
 $a+b_j$, can belong to the forbidden sublattice $\Lambda_1$ we can
 select from each pair $b_j$, $a+b_j$ one contained in
 $\Lambda\setminus\Lambda_1$, $1\leq j\leq n$. Let these points
 be denoted by $\overline{b}_j$, $j=1,\dots,n$. Then
 $a,\overline{b}_j\in\left(\lambda_1(K,\Lambda\setminus\Lambda_1)+\lambda_{j}(K,\Lambda)\right)K$,
           $1\leq j\leq n$.
 
Now choose $k$ such that $a\notin\lin(\{b_1,\dots,b_n\}\setminus\{b_k\})$.
Then the lattice points
$a,\overline{b}_1,\dots\overline{b}_{k-1},
\overline{b}_{k+1},\dots,\overline{b}_n$ are linearly independent and
we are done. 
\end{proof}

\begin{remark}
It is also possible to extend lower-dimensional lattices to lattices of full rank by
adjoining ``sufficiently large'' vectors, i.e. for each $\Lambda_i$ of rank
$n_i$ choose linearly independent $z_{i,n_i+1},\ldots,z_{i,n}\in
\Lambda\setminus\Lambda_i$ and consider the lattice $\overline{\Lambda}_i$ spanned
by $\Lambda_i$ and $z_{i,n_i+1},\ldots,z_{i,n}$.
If $z_{i,j}$ are such that $\lambda_j(K,\overline{\Lambda}_i)$ is very large for
$j>n_i$,
one can apply the results from Section~\ref{sec:full} to the collection
$\overline{\Lambda}_i$, $1\leq i\leq s$.
However, the bounds obtained in this way are in general weaker, with one exception in the case $s=1$
for the bound on $\lambda_1(K\cap\Lambda\setminus \Lambda_1)$.
Here we get   
\begin{equation*}
 \lambda_1\left(K,
  \Lambda\setminus\Lambda_1\right)\leq \frac{2^n\det\Lambda}{\lambda_1(K,\Lambda_1)^{n-1}\vol(K)}+
\lambda_1(K,\Lambda)
\end{equation*}
for $\Lambda_1\subsetneq\Lambda$ with $\rg \Lambda_1<n$, which
improves on Theorem~\ref{thm:lower}. 
\end{remark}


\begin{thebibliography}{10}

\bibitem{Ball:1991wn}
K.~Ball.
\newblock {The plank problem for symmetric bodies}.
\newblock {\em Inventiones mathematicae}, 104(3):535--543, 1991.

\bibitem{Ball1990}
K.~Ball and A.~Pajor.
\newblock {Convex bodies with few faces}.
\newblock {\em Proceedings of the American Mathematical Society},
  110(1):225--231, 1990.

\bibitem{Barany:2001uz}
I.~Barany, G.~Harcos, J.~Pach, and G.~Tardos.
\newblock {Covering lattice points by subspaces}.
\newblock {\em Periodica Mathematica Hungarica}, 43(1-2):93--103, March 2001.

\bibitem{Betke1993e}
U.~Betke, M.~Henk, and J.M.~Wills.
\newblock {Successive-minima-type inequalities}.
\newblock {\em Discrete {\&} Computational Geometry. An International Journal
  of Mathematics and Computer Science}, 9(2):165--175, 1993.

\bibitem{Bezdek:1994wt}
K.~Bezdek and T.~Hausel.
\newblock {On the number of lattice hyperplanes which are needed to cover the
  lattice points of a convex body}.
\newblock {\em Colloq Math Soc J{\'a}nos Bolyai}, 1994.

\bibitem{Bezdek:2009bc}
K.~Bezdek and A.~E.~Litvak.
\newblock {Covering Convex Bodies by Cylinders and Lattice Points by Flats}.
\newblock {\em Journal of Geometric Analysis}, 19(2):233--243, April 2009.

\bibitem{Naewe:2007vl}
J. Bl{\"o}mer and S. Naewe.
\newblock {Sampling Methods for Shortest Vectors, Closest Vectors
  and Successive Minima}.
\newblock LNCS 4596, 2007.

\bibitem{Bombieri:1983uz}
E.~Bombieri and J.~D.~Vaaler.
\newblock {On Siegel's Lemma}.
\newblock {\em Inventiones mathematicae}, 73:11--32, 1983.

\bibitem{Fukshansky:2006ti}
L.~Fukshansky.
\newblock {Integral points of small height outside of a hypersurface}.
\newblock {\em Monatshefte f{\"u}r Mathematik}, 147(10):25--41, 2006.

\bibitem{Fukshansky:2006db}
L.~Fukshansky.
\newblock {Siegel's lemma with additional conditions}.
\newblock {\em Journal of Number Theory}, 120(1):13--25, September 2006.


\bibitem{Gaudron:2009iu}
{\'E}. Gaudron.
\newblock {G{\'e}om{\'e}trie des nombres ad{\'e}lique et lemmes de Siegel
  g{\'e}n{\'e}ralis{\'e}s}.
\newblock {\em Manuscripta Mathematica}, 130(2):159--182, 2009.

\bibitem{Gaudron:2012hq}
{\'E}. Gaudron and G. R{\'e}mond.
\newblock {Lemmes de Siegel d'{\'e}vitement}.
\newblock {\em Acta Arithmetica},
  154(2):125--136, 2012.

\bibitem{Gaudron:tk}
{\'E}. Gaudron and G. R{\'e}mond.
\newblock {Polarisations et isog{\'e}nies}.
\newblock {\em preprint; \url{http://math.univ-bpclermont.fr/~gaudron/}}.

\bibitem{Gruber:1987vp}
P.~M.~Gruber and C.~G.~Lekkerkerker.
\newblock {\em {Geometry of numbers}}, volume~37.
\newblock Amsterdam, 1987.

\bibitem{Gruber:2007um}
P.~M.~Gruber.
\newblock {\em {Convex and Discrete Geometry}}, volume 336 of {\em Grundlehren
  der mathematischen Wissenschaften}.
\newblock Springer, 2007.

 \bibitem{Henze:2012tr}
 M.~Henze.
 \newblock {A Blichfeldt-type inequality for centrally symmetric convex bodies}.
 \newblock {\em Monatshefte f{\"u}r Mathematik}, to appear.

\bibitem{Vaaler1979}
J.~D.~Vaaler.
\newblock {A geometric inequality with applications to linear forms}.
\newblock {\em Pacific Journal of Mathematics}, 83(2):543--553, 1979.

\bibitem{Vaaler:2003dw}
J.~D.~Vaaler.
\newblock {The Best Constant in Siegel's Lemma}.
\newblock {\em Monatshefte f{\"u}r Mathematik}, 140(1):71--89, September 2003.

\end{thebibliography}
\end{document}